\renewcommand{\Re}{\operatorname{Re}}
\renewcommand{\Im}{\operatorname{Im}}
\def\R{\ensuremath\mathbb{R}}
\def\C{\ensuremath\mathbb{C}}
\def\Z{\ensuremath\mathbb{Z}}
\def\wide{\ensuremath\mathbf{Wide}}
\newtheorem{thm}{Theorem}[section]
\newtheorem{cor}[thm]{Corollary}
\newtheorem{lemma}[thm]{Lemma}
\newtheorem{prop}[thm]{Proposition}
\theoremstyle{remark}
\newtheorem{remark}[thm]{Remark}
\def\eps{\ensuremath\varepsilon}
\def\0{\emptyset}
\def\GL{\mathrm{GL}}
\def\modulo{\text{ \rm mod }}
\numberwithin{equation}{section}
\numberwithin{equation}{section}
\begin{document}
\title[Wide moments II]{Wide moments of $L$-functions II: Dirichlet $L$-functions}
\author{Asbj\o rn Christian Nordentoft}

\address{Universit\'{e} Paris-Saclay, Laboratoire de Math\'{e}matiques d'Orsay, 307, rue Michel Magat, 91405 Orsay Cedex, France}

\email{\href{mailto:acnordentoft@outlook.com}{acnordentoft@outlook.com}}

\date{\today}
\subjclass[2010]{11M06(primary), and 11M35(secondary)}
\maketitle
\begin{abstract}
We study \emph{wide moments} of Dirichlet $L$-functions using analytic properties of the Lerch zeta function. Among other things we obtain an asymptotic expansion of wide moments of Dirichlet $L$-functions (with arbitrary twists) extending results of Heath--Brown. We also give applications to non-vanishing. 
   \end{abstract}
\section{Introduction}
Understanding automorphic $L$-functions at the central point is an important and notoriously difficult problem in number theory. In this paper we deal with the case of Dirichlet $L$-function and in particular the asymptotic evaluation of certain moments as the moduli tends to infinity. The first result in this direction was obtained by Paley \cite{Paley31} who proved 
$$ \sum_{\chi\text{ primitive}\modulo q} |L(\chi,1/2)|^2=\frac{\varphi(q)\varphi^*(q)}{q}\log q(1+o(1)) , \quad q\rightarrow \infty,$$
where $\varphi(q),\varphi^*(q)$ denotes, respectively the number of Dirichlet characters and primitive Dirichlet characters modulo $q$. This was improved by Heath-Brown \cite{HeathBrown81} who obtained the full asymptotic expansion. The fourth moment with a power saving error-term was computed by Young \cite{YoungAnnals11}. In this paper we obtain a generalization of Heath-Brown's formula \cite[(5)]{HeathBrown81} along with a number of other asymptotic evaluations of what we call \emph{wide moments} (see Section \ref{sec:widemoment}) using connections to \emph{automorphic periods} related to the \emph{Lerch zeta function}. 

Moment calculations are important in their own right with profound connections to random matrix theory \cite{KeaSna00}, but also have applications to the important topic of non-vanishing. In our setting Chowla has conjectured that $L(\chi, 1/2)\neq 0$ for \emph{all} primitive Dirichlet characters $\chi$. The best result towards this (in the moduli aspect) is due to Mili\'{c}evi\'{c}--Khan--Ngo \cite{KhanMilNgo19} who proved that the proportion of non-vanishing of $L(\chi,1/2)$ for $\chi$ primitive modulo $p$ is at least $5/13-\eps$ (for all $\eps>0$) when $p$ is a prime tending to infinity. We obtain a different kind of non-vanishing result; so-called \emph{weak simultaneous non-vanishing} (see Theorems \ref{cor:nonvaniPY} and \ref{cor:nonvaniBettin}).

\subsection{$L$-functions and automorphic periods}
A vital fact in the study of $L$-functions is that in certain situations the central values of $L$-functions are dual (in a Fourier sense) to \emph{automorphic periods}. In some favourable situations these automorphic periods are very well-behaved, which can be used to study the $L$-functions themselves. 
To illustrate this, let $f$ be a weight $2$ Hecke (normalized) newform of level $N$ (with rational Hecke eigenvalues), then it follows from the \emph{Birch--Stevens formula} \cite[Theorem 2.3]{MaRu19} that we can write for a primitive Dirichlet character $\chi \modulo q$:
$$ \tau(\overline{\chi})L(f\times \chi, 1/2)=\sum_{a\modulo q} \overline{\chi}(a) 2\pi i\int_{a/q}^{i\infty} f(z)dz,  $$
where $L(f\times \chi,s)=\sum_{n\geq 1} \lambda_f(n)\chi(n)n^{-s}$ with $\lambda_f(n)$ the Hecke eigenvalues of $f$ and $\tau(\overline{\chi})$ is a Gau{\ss} sum. The point being now that the automorphic periods $\int_{a/q}^{i\infty} f(z)dz$, known as \emph{modular symbols}, are rational multiples of a transcendental number depending only on $\chi(-1)\in \{\pm 1\}$ (more precisely the N\'{e}ron periods of the corresponding elliptic curve). This implies that non-vanishing is Galois invariant, which was utilized by Rohrlich \cite{Rohrlich89} and subsequently Chinta \cite{Chinta02} to obtain very strong non-vanishing in this set-up. In particular it is known that for Dirichlet characters $\chi$ of prime moduli, $100 \%$ of the central values $L(f\times \chi,1/2)$ are non-vanishing (in big contrast to the situation of Dirichlet $L$-functions as we saw above). In a different direction, the periods associated to $f$ are also very well-behaved from a statistical point of view; they are normally distributed when ordered by denominator as proved by Petridis--Risager \cite{PeRi}, see also \cite{Nordentoft20.4}, \cite{BeDr19} for results on higher weights and \cite{LeeSun19}, \cite{ConstNor22} for residual distribution results.  
  
In this paper we consider the case of Dirichlet $L$-functions in which case Galois invariance is not present. We do however still have the following {\lq\lq}period formula{\rq\rq} for $\chi$ primitive modulo $q$:
\begin{equation}\label{eq:periodformula} \tau(\overline{\chi})L(\chi, 1/2)=\pi^{-1/2}\sum_{a\modulo q} \overline{\chi}(a)\int_0^\infty \frac{t^{-1/2} e^{-t}}{1-e(a/q)e^{-t}}dt,   \end{equation}
where $e(x):=e^{2\pi i x}$. Again we interpret this as Dirichlet $L$-functions being dual to the {\lq\lq}periods{\rq\rq} $\int_0^\infty \frac{t^{-1/2} e^{-t}}{1-e(a/q)e^{-t}}dt$. These periods do however not have any nice rationality properties (Galois invariance \emph{is} however true for $ \tau(\overline{\chi})L(\chi, s)$ with $s$ a negative integer, which is crucial in the theory of $p$-adic $L$-functions \cite{LeopoldtKubota64}). These periods are on the other hand very well behaved as complex numbers, in the sense that the map    
$$ (0,1)\ni \alpha \mapsto \int_0^\infty \frac{t^{-1/2} e^{-t}}{1-e(\alpha)e^{-t}}dt, $$
is continuous (and in fact locally analytic). The above function, known as the \emph{periodic zeta function}, is a special case of the \emph{Lerch zeta function}. The main technical work in this paper will be to understand the analytic properties of this classical object, which we will then use to study Dirichlet $L$-functions. There is an extensive literature on the Lerch zeta function (e.g. \cite{LagLi12}, \cite{LauGar02}), but the specific properties that we will need do not seem to be available. 

\subsection{Wide moments}\label{sec:widemoments}
The present paper is a companion to \cite{Nordentoft24} in which certain \emph{wide moments} are calculated for Rankin--Selberg $L$-functions. In both cases the starting point is the following basic Fourier theoretical observation: Let $G$ be a finite abelian group and denote by $\widehat{G}$ the group of characters of $G$. Given a map $L: G\rightarrow \C$, we denote by $\widehat{L}: \widehat{G}\rightarrow \C$ the Fourier transform:
$$ \chi\mapsto \frac{1}{|G|}\sum_{g\in G} \overline{\chi}(g) L(g). $$  Then given maps $L_1,\ldots, L_m: G\rightarrow \C$, we have the following key identity:
\begin{equation}
\label{eq:fouriertrick}  \sum_{(\chi_i)_i\in \wide(\widehat{G},m;\chi)} \prod_{i=1}^m \widehat{L_i}(\chi_i)=\frac{1}{|G|}\sum_{g\in G} \overline{\chi}(g)\prod_{i=1}^m L_i(g),
\end{equation}
where
\begin{equation}\label{eq:Wide} \wide(\widehat{G},m;\chi):=\{ (\chi_1,\ldots, \chi_m)\in (\widehat{G})^m: \chi_1\cdots \chi_m=\chi \}. \end{equation}
If the maps $L_i$ are well-behaved and in particular if one can calculate the moments on the righthand side of (\ref{eq:fouriertrick}), then one gets {\lq\lq}for free{\rq\rq} a calculation of the dual side, which is what we call the \emph{$\chi$-twisted wide moment of $\widehat{L_1},\ldots, \widehat{L_m}$} (i.e. the lefthand side of (\ref{eq:fouriertrick})). 

The moments of automorphic periods which appear in the present paper and in \cite{Nordentoft24} both fit into the following framework; one has a sequence of groups which all embed into some fixed topological space $X$:
$$G_1,G_2,G_3, \ldots \hookrightarrow  X, $$
in a way such that the maps (that appear in the wide moments) $L_{i,j}:G_j\rightarrow \C$ are restrictions (to $G_j\hookrightarrow X$) of fixed continuous functions $L_i:X\rightarrow \C$ and such that $G_j$ equidistribute  as $j\rightarrow \infty$ with respect to some measure $\mu$ on $X$  (in \cite{Nordentoft24} the $L_i$ are the relevant Maa{\ss} forms evaluated at Heegner points on the modular curve with hyperbolic measure whereas in the present paper we consider Lerch zeta functions evaluated at rational numbers considered as points on the unit circle with Lebesgue measure).  
\subsection{Statement of results}\label{sec:state}
To state our result we will use the following short-hand given an integer $q\geq 2$ and a Dirichlet character $\chi$ modulo $q$:
\begin{equation} \wide(q,m;\chi):=\{ \chi_1,\ldots, \chi_m \text{ Dirichlet characters mod $q$}: \chi_1\cdots \chi_m=\chi \},   \end{equation}
and
\begin{equation} \label{eq:wide*}\wide^*(q,m;\chi):=\{ (\chi_i)_i \in \wide(q,m;\chi): \chi_i\text{ primitive} \}.   \end{equation}
We obtain a number of wide moment calculations averaging over $\wide(q,m;\chi)$ and related families (see Section \ref{sec:widemoment}). In particular, we obtain the following asymptotic expansion.  
\begin{thm}\label{thm:asympexpintro}For $i=1,\ldots, m$ with $m\geq 2$, let $\psi_i\modulo \ell_i$ be Dirichlet characters. Then we have  
\begin{align}\label{eq:widemomentmain}
\frac{1}{\varphi(q)^{m-1}}\sum_{(\chi_i)_i\in \wide(q,m;1) } \prod_{i=1}^m L(\psi_i\chi_i,1/2)=\sum_{d|q} \mu(q/d) T_{\overline{d},\psi_i}(d),      
\end{align}
where $T_{\overline{d},\psi_i}$ depends only on the residue class $\overline{d}$ of $d$ modulo $\ell:=\mathrm{lcm}(\ell_1,\ldots, \ell_m)$ and on the Dirichlet characters $\psi_i$. Furthermore, given $a\modulo \ell$, we have the following asymptotic expansion for any $K\geq 1$:
$$T_{a,\psi_i}(d)= L(\psi,  m/2)+c_{m,a} d^{1-m/2} \log d +\sum_{k=1}^{K-1} c_{k,m,a}d^{-k/2} +O_{m, K ,\ell}(d^{-K/2}),$$
where $\psi:=\psi_1\cdots \psi_m$ and $c_{m,a},(c_{k,m,a})_{k\geq 1}$ are certain numerical constants depending on $\psi_1,\ldots, \psi_m$ and $a\modulo \ell$.
\end{thm}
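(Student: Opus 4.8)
The plan is to combine the Fourier-theoretic identity \eqref{eq:fouriertrick} with a twisted form of the period formula \eqref{eq:periodformula}, thereby reducing the left-hand side of \eqref{eq:widemomentmain} to a ``diagonal'' sum, and then to read off the asymptotic expansion of that sum by Euler--Maclaurin summation using the analytic properties of the Lerch zeta function. First I would apply \eqref{eq:fouriertrick} with $G=(\Z/q\Z)^{\times}$ and $\widehat G$ the group of Dirichlet characters modulo $q$, choosing maps $\mathcal L_i$ so that $\widehat{\mathcal L_i}(\chi)=L(\psi_i\chi,1/2)$. Fourier inversion together with the Dirichlet series of $L(\psi_i\chi,s)$ (continued to $s=1/2$) and orthogonality of characters modulo $q$ gives $\mathcal L_i(g)=\varphi(q)\,Z_{\psi_i}(g^{-1};q)$, where $Z_{\psi_i}(h;q):=\sum_{n\equiv h\,(q)}\psi_i(n)n^{-1/2}$ is a finite combination of Hurwitz--Lerch zeta values --- a twisted analogue of \eqref{eq:periodformula} valid for all characters modulo $q$. (Alternatively one may use \eqref{eq:periodformula} directly on the primitive subfamily, disposing of the Gauss sums by $\prod_i\tau(\overline{\chi_i})^{-1}=q^{-m}\prod_i\tau(\chi_i)$, valid since $\prod_i\chi_i=1$, and then a second orthogonality.) This collapses the average over $\wide(q,m;1)$ to $\sum_{h\in(\Z/q\Z)^{\times}}\prod_{i=1}^m Z_{\psi_i}(h;q)$.

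Next I would strip the condition $(h,q)=1$ by M\"obius inversion, using $Z_{\psi_i}(\delta j;q)=\psi_i(\delta)\delta^{-1/2}Z_{\psi_i}(j;q/\delta)$ for $\delta\mid q$ --- the dual of the standard decomposition of characters modulo $q$ by conductor. This produces exactly the divisor sum $\sum_{d\mid q}\mu(q/d)\,T_{\overline d,\psi_i}(d)$, where $T_{\overline d,\psi_i}(d)$ is the associated complete sum $\sum_{j\bmod d}\prod_{i=1}^m Z_{\psi_i}(j;d)$ (the elementary factors produced by the M\"obius step being absorbed into the definition of $T$). That $T_{\overline d,\psi_i}$ depends only on $\overline d\bmod\ell$ is then apparent: for fixed $j$ the terms of $Z_{\psi_i}(j;d)=\sum_{k\ge 0}\psi_i(j+kd)(j+kd)^{-1/2}$ involve $\psi_i(j+kd)$, which depends only on $j$ and on $d\bmod\ell_i$, hence on $d\bmod\ell$.

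The decisive step is the asymptotic expansion of $T_{a,\psi_i}(d)$ as $d\to\infty$. The key feature is that $\prod_iZ_{\psi_i}(j;d)$, viewed as a function of the point $j/d$ on the unit circle, is a product of values of the (twisted) Lerch zeta function and hence has a \emph{non-integrable} singularity at $j/d=0$ (and at $1$): up to bounded periodic-in-$j$ factors it is $\asymp (j/d)^{-m/2}$, the leading behaviour coming from the $k=0$ term $\prod_i\psi_i(j)j^{-1/2}=\psi(j)j^{-m/2}$. So $T_{a,\psi_i}(d)$ is not a Riemann sum and its size is driven by the terms with $j$ small. I would (i) insert the precise local expansion of the Lerch zeta function at rational points near the singularity --- an expansion in ascending half-integer powers of $j/d$ starting from $(j/d)^{-m/2}$, with bounded coefficients depending on $\psi_i$, on $j\bmod\ell$ and on $d\bmod\ell$ --- so that $T_{a,\psi_i}(d)$ becomes a sum over $j\le d$ of terms of the shape $d^{-k/2}\,j^{(k-m)/2}$ times such coefficients; and then (ii) evaluate each sum $\sum_{1\le j\le d}j^{(k-m)/2}$ (and its residue-class refinements, because of the $\psi$-factors) by Euler--Maclaurin. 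The level $k=0$ yields $\sum_{j\ge1}\psi(j)j^{-m/2}=L(\psi,m/2)$ once the tail $\sum_{j>d}$ is put into the error; the ``resonant'' level $k=m-2$, where $\sum_j j^{(k-m)/2}$ behaves like $\log d$, produces $c_{m,a}\,d^{1-m/2}\log d$; and the remaining levels, together with the Bernoulli terms from Euler--Maclaurin and the truncation of the local expansion, produce $\sum_{k=1}^{K-1}c_{k,m,a}\,d^{-k/2}+O_{m,K,\ell}(d^{-K/2})$.

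I expect step (i) to be the real obstacle: one needs the local expansion of the twisted Lerch zeta function near its singular points to arbitrary order, with explicit and combinatorially manageable coefficients, and with enough uniformity in $d$ (and the remaining parameters) to survive being raised to the $m$-th power and summed over $j$ --- exactly the classical-looking but apparently unavailable analytic information the paper sets out to develop. The Euler--Maclaurin bookkeeping in (ii) is technical but routine in spirit; the two subtleties there are recognizing that the $k=0$ contribution reassembles into the single $L$-value $L(\psi,m/2)$, and correctly isolating the logarithmic resonance at $k=m-2$. The Fourier/orthogonality reduction and the conductor bookkeeping of the first two paragraphs are comparatively routine. Finally, when $m=2$ and $\psi$ is principal, $L(\psi,m/2)=L(\psi,1)$ must be taken from the Laurent expansion and merges with the $d^{0}\log d=\log d$ term, recovering Heath--Brown's $\log q$; this degenerate case is handled separately.
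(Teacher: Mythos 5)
Your proposal follows essentially the same route as the paper's proof: the Fourier identity \eqref{eq:fouriertrick} (with the $\GL_1$ Birch--Stevens formula of Proposition~\ref{prop:BirchStevens} giving the period map) collapses the wide moment to $\ell^{-m/2}\sum_{(b,\ell)=1}\prod_i L(\chi_i,b/\ell,1/2)$, then M\"obius inversion strips the coprimality condition, and the resulting complete sum is evaluated by Euler--Maclaurin after inserting the local expansion of the twisted Hurwitz zeta at the singular point (Proposition~\ref{prop:taylor} and Proposition~\ref{prop:expansiontwisted} in the paper, Theorem~\ref{thm:asympexp} for the Euler--Maclaurin bookkeeping). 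You correctly identify the two nontrivial points: the local half-integer-power expansion near the singularity (the paper's Propositions~\ref{prop:taylor}--\ref{prop:expansiontwisted}), and the logarithmic resonance at exponent $-1$ producing the $d^{1-m/2}\log d$ term; your observation about the degenerate $m=2$, $\psi$ principal case (where $L(\psi,m/2)$ is a pole merging with the logarithmic term to recover Heath--Brown) is a subtlety the paper's statement glosses over.

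One small bookkeeping point worth making explicit (and which your phrase ``the elementary factors produced by the M\"obius step being absorbed into the definition of $T$'' does not quite settle): since $Z_{\psi_i}(\delta j;q)=\psi_i(\delta)\delta^{-1/2}Z_{\psi_i}(j;q/\delta)$, the M\"obius step in fact produces a factor $\psi(q/d)(q/d)^{-m/2}$ multiplying the complete sum $\sum_{j\bmod d}\prod_iZ_{\psi_i}(j;d)$; this factor depends on $q/d$, not on $d$ alone, so it cannot literally be absorbed into a $T$ that depends only on $\overline d\bmod\ell$. The paper's own proof silently applies M\"obius inversion as though $\prod_iL(\chi_i,b/\ell,1/2)$ were a function of the ratio $b/\ell$, which drops the same factor; in practice this does not harm the leading asymptotics because $(q/d)^{-m/2}\ll 1$ and the $d<q$ terms are already subordinate, but a fully precise statement should either assume $\prod_i\psi_i(q/d)(q/d)^{-m/2}$ is retained in the divisor sum or restrict, e.g., to $q$ prime. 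Apart from this shared imprecision, your approach and the paper's coincide.
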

\begin{remark}
In particular for $q$ prime, the above gives an asymptotic formula in terms of $q$ where the coefficients in the expansion only depend on $q$ modulo $\ell$.
\end{remark}
\begin{remark}
In the case of trivial twists, i.e. $\psi_i=1$, the above wide moment calculation  corresponds to the moments of the Hurwitz zeta function, which were treated by Egami--Matsumoto \cite[Theorem 3]{EgamiMatsu02} using multiple zeta sums (note that they only state the result for the third moment). They do however not notice the connection to wide moments of Dirichlet $L$-functions. The case $m=2$ was done previously by Heath-Brown \cite{HeathBrown81} using an approximate functional equation. 
\end{remark}

Using our moment calculations, we obtain a number of \emph{weak simultaneous non-vanishing results}, meaning simultaneous non-vanishing for different Dirichlet $L$-functions with an {\lq\lq}algebraic condition{\rq\rq} on the Dirichlet characters (their product is fixed). See \cite[Sec. 2]{Nordentoft24} for background and a general definition. 
\begin{thm}\label{cor:nonvaniPY}
For $i=1,\ldots, m$ with $m\geq 2$, let $\psi_i$ modulo $\ell_i$ be a primitive Dirichlet character and put $\ell^*=\max(\ell_1,\ldots, \ell_m)$. Then for any $\eps>0$ there exists a constant $C_{m,\eps}>0$ such that all $q\geq C_{m,\eps} (\ell^*)^{m/(2m-4)+\eps}$ and any character $\chi$ modulo $q$:
\begin{align}
  \label{eq:nonvaniell}&\#\{(\chi_i)_i\in \wide^*(q, m;\chi) : L(\psi_1\chi_1,1/2)\cdots L(\psi_m\chi_m,1/2)\neq 0  \}\\
  \nonumber &\gg_\eps q^{5m/6-1-\eps}(\ell^*)^{-m/6},
\end{align}
as $q\rightarrow \infty$.
\end{thm}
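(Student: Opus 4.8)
The natural approach is the Cauchy--Schwarz (first moment/second moment) method for non-vanishing, with the first moment supplied by the wide-moment machinery. Denote by $\mathcal{N}$ the quantity on the left-hand side of \eqref{eq:nonvaniell} and set
$$ M_1:=\sum_{(\chi_i)_i\in\wide^*(q,m;\chi)}\prod_{i=1}^m L(\psi_i\chi_i,1/2),\qquad M_2:=\sum_{(\chi_i)_i\in\wide^*(q,m;\chi)}\Bigl|\prod_{i=1}^m L(\psi_i\chi_i,1/2)\Bigr|^{2}. $$
The summand of $M_1$ is supported on the set counted by $\mathcal{N}$, so Cauchy--Schwarz gives $\mathcal{N}\ge |M_1|^2/M_2$; it therefore suffices to bound $M_1$ from below and $M_2$ from above, both uniformly in $\ell^*$.

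For the lower bound on $M_1$ I would invoke Theorem~\ref{thm:asympexpintro} in its twisted form over $\wide^*(q,m;\chi)$ (the general result of Section~\ref{sec:widemoment}); after removing the primitivity conditions by M\"{o}bius inversion, which contributes only sums over proper divisors of $q$ of strictly smaller order, one obtains $M_1 = c\,\varphi(q)^{m-1}+(\text{error})$, where for $m\ge 3$ the leading constant $c$ is controlled by $L(\psi,m/2)\ne 0$ (valid since $\Re(m/2)>1$; when $\psi$ is trivial the main term is larger still). The hypothesis $q\ge C_{m,\eps}(\ell^*)^{m/(2m-4)+\eps}$ is precisely the threshold beyond which the error term --- whose dependence on $\ell^*$ is tracked through the analytic study of the Lerch zeta function --- together with the imprimitive corrections, is dominated by the main term; hence $|M_1|\gg_m \varphi(q)^{m-1}\gg_m q^{m-1}$.

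For the upper bound on $M_2$, which is the heart of the matter: this is \emph{not} a wide moment (the characters appear paired with their conjugates) and so cannot be evaluated, only estimated. Writing $\bigl|\prod_i L(\psi_i\chi_i,1/2)\bigr|^2 = \bigl(\prod_i|L(\psi_i\chi_i,1/2)|\bigr)^2$ and bounding one of the two factors pointwise by the Weyl-strength subconvexity estimate $L(\psi_i\chi_i,1/2)\ll (q\ell^*)^{1/6+\eps}$ (Conrey--Iwaniec, Petrow--Young), one gets
$$ M_2\ \ll\ (q\ell^*)^{m/6+\eps}\sum_{(\chi_i)_i\in\wide^*(q,m;\chi)}\prod_{i=1}^m |L(\psi_i\chi_i,1/2)|. $$
The remaining first absolute moment I would control by H\"{o}lder's inequality over the $m$ coordinates, reducing it to the $m$-th power moments $\sum_{\chi\bmod q}|L(\psi_i\chi,1/2)|^{m}$; these are bounded, with explicit $\ell^*$-dependence, by the (twisted) fourth-moment estimate of Heath-Brown and Young, interpolated with pointwise subconvexity. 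Optimising the split then gives $\sum_{\wide^*(q,m;\chi)}\prod_i|L(\psi_i\chi_i,1/2)|\ll q^{m-1+\eps}$ (up to a mild, absorbable power of $\ell^*$), and hence $M_2\ll q^{7m/6-1+\eps}(\ell^*)^{m/6}$.

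Combining, $\mathcal{N}\ge |M_1|^2/M_2 \gg q^{2(m-1)-(7m/6-1)-\eps}(\ell^*)^{-m/6} = q^{5m/6-1-\eps}(\ell^*)^{-m/6}$, which is \eqref{eq:nonvaniell}. The principal obstacle is exactly the estimation of $M_2$: one is forced to bound a genuine $2m$-fold moment of Dirichlet $L$-functions that lies past the range of asymptotic evaluation, and to do so with full uniformity in the conductors $\ell_i$ of the twists --- it is the interplay between the subconvex pointwise bound and the power moments here, set against the size of the main term of $M_1$, that both fixes the exponent $5m/6-1$ and produces the threshold $m/(2m-4)$ together with the saving $(\ell^*)^{-m/6}$.
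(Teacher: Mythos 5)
Your lower bound for $M_1$ is exactly the paper's input, but your route from $M_1$ to the non-vanishing count is both more complicated than necessary and, as you set it up, actually has a gap that prevents you from reaching the stated exponent.

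The paper does not use Cauchy--Schwarz at all. Having established $|M_1|\gg_m q^{m-1}$ (via Corollary~\ref{cor:uniforminq}), it simply bounds each summand of $M_1$ pointwise by the Petrow--Young bound $\prod_i L(\psi_i\chi_i,1/2)\ll_\eps (q\ell^*)^{m/6+\eps}$, so that $|M_1|\le (q\ell^*)^{m/6+\eps}\cdot \mathcal{N}$; rearranging gives $\mathcal{N}\gg q^{m-1}(q\ell^*)^{-m/6-\eps}=q^{5m/6-1-\eps}(\ell^*)^{-m/6-\eps}$. This is a bare $L^\infty$--$L^1$ duality: no second moment is ever touched.

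Your Cauchy--Schwarz variant \emph{requires} an upper bound on $M_2$, and the one you claim, $M_2\ll q^{7m/6-1+\eps}(\ell^*)^{m/6}$, is not attainable by the tools you cite. After peeling off one factor of $\prod_i|L(\psi_i\chi_i,1/2)|$ by subconvexity you are left needing $\sum_{\wide^*}\prod_i|L(\psi_i\chi_i,1/2)|\ll q^{m-1+\eps}$. By H\"older over the $m$ coordinates this reduces to $\sum_{\chi\bmod q}|L(\psi_i\chi,1/2)|^m\ll q^{1+\eps}$, which is a genuine $m$-th moment bound. For $m\le 4$ this follows from a twisted fourth moment (already nontrivial with uniformity in $\ell_i$), but for $m>4$ the best available is $\sum_\chi|L(\psi_i\chi,1/2)|^m\ll (q\ell^*)^{(m-4)/6+\eps}q^{1+\eps}$ via the fourth moment and pointwise subconvexity; feeding this back through H\"older and your $M_2$ step yields $\mathcal{N}\gg q^{m-1-(m-2)/3-\eps}(\ell^*)^{-(m-2)/3-\eps}$, which is strictly weaker than $q^{5m/6-1}(\ell^*)^{-m/6}$ whenever $m>4$. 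So the ``optimising the split'' you allude to does not close the gap; the Cauchy--Schwarz detour introduces a loss that the direct $L^\infty$ argument avoids. You correctly sensed that $M_2$ is the obstruction, but the fix is to not invoke $M_2$ at all: the positivity-free, one-line pointwise bound on the summands of $M_1$ is already enough.

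As a smaller point, the threshold $q\gg(\ell^*)^{m/(2m-4)+\eps}$ does not arise from imprimitivity corrections or a tuning of your $M_2$ analysis; it comes straight out of the error term in Corollary~\ref{cor:uniforminq} (equivalently Theorem~\ref{thm:generaltwist}), whose two error contributions $(q^*)^{m/4+\eps}\ell^{1-m/2}$ and $(q^*)^{1/4+\eps}\ell^{-1/2}$ must be dominated by the main term $L(\chi\psi,m/2)$; the first of these is what produces the exponent $m/(2m-4)$.
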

We note that for $m=3$, the above theorem gives 
$$ \#\{ \chi_1,\chi_2\modulo q: L(\psi_1\chi_1,1/2)L(\psi_2\chi_2,1/2)L(\psi_3\chi_1\chi_2,1/2)\neq 0  \}\gg_\eps q^{3/2-\eps}(\ell^*)^{-1/2},  $$
for $q\gg_\eps (\ell^*)^{3/2+\eps}$, with notation as above. This should be compared with the results of Zacharias \cite[p. 2]{Zach19}, who proved using the mollification method that for any (fixed) Dirichlet characters $\psi_1\mod \ell_1,\psi_2\mod \ell_2$ there exists a positive proportion of primitive Dirichlet characters $\chi\modulo q$ such that 
$$ L(\chi,1/2)L(\psi_1\chi,1/2)L(\psi_2\chi,1/2)\neq 0. $$   
We note that the mollification method does not give any information on the algebraic structure of the non-vanishing set (of Dirichlet characters). In fact as  explained in \cite[Sec. 2]{Nordentoft24}, one needs to know non-vanishing of each individual $L(\psi_i\chi,1/2)$ (with $i=1,\ldots,m$) for at least $50\%$ of all $\chi \modulo q$ in order to conclude (by purely combinatorial means)  that the set in (\ref{eq:nonvaniell}) is \emph{non-empty}, even. As mentioned above $50\%$ non-vanishing is not known in the case of Dirichlet $L$-functions (in the $q$-aspect).   

If we restrict to trivial $\psi_i$ and $\chi$ above, we can prove a better lower bound for the non-vanishing set due to a second moment calculation of Bettin \cite{Be17}. Furthermore, we can even make the estimate uniform in the width of the moment $m$. 
\begin{thm}\label{cor:nonvaniBettin}
Let $q $ be prime and $m\geq 3$. Then we have uniformly in the range $m=o(q^{1/2}\log q)$ that
\begin{align*}
  \label{eq:nonvaniBettin}\#\{(\chi_i)_i\in \wide^*(q, m;1) : L(\chi_1,1/2)\cdots L(\chi_m,1/2)\neq 0  \} \gg q^{m-1}/(\log q)^m, 
\end{align*}
as $q\rightarrow \infty$.
\end{thm}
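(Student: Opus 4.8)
The plan is to derive the lower bound from a first- and a second-moment estimate combined via the Cauchy--Schwarz inequality, exactly as in \cite[Sec.~2]{No21}. Write $\mathcal W=\wide^*(q,m;1)$ and, for $\underline\chi=(\chi_1,\dots,\chi_m)\in\mathcal W$, set $\Lambda(\underline\chi)=\prod_{i=1}^m L(\chi_i,1/2)$. Since a character modulo a prime $q$ is primitive exactly when it is non-trivial, and since $\mathcal W$ is stable under $\chi\mapsto\overline\chi$ (so $|\Lambda(\underline\chi)|^2=\prod_i L(\chi_i,1/2)L(\overline\chi_i,1/2)$ is again a product of central values over a wide family), both $\mathcal M_1:=\sum_{\underline\chi\in\mathcal W}\Lambda(\underline\chi)$ and $\mathcal M_2:=\sum_{\underline\chi\in\mathcal W}|\Lambda(\underline\chi)|^2$ are accessible by opening the constraint $\prod_i\chi_i=1$ with character orthogonality, i.e.\ by \eqref{eq:fouriertrick}. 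Cauchy--Schwarz over the non-vanishing tuples gives $|\mathcal M_1|^2\le\#\{\underline\chi\in\mathcal W:\Lambda(\underline\chi)\ne0\}\cdot\mathcal M_2$, so it suffices to establish $|\mathcal M_1|^2(\log q)^m\gg\mathcal M_2\,q^{m-1}$ uniformly in the stated range.

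For $\mathcal M_1$: opening the constraint, one gets $\sum_{\underline\chi\in\wide(q,m;1)}\Lambda(\underline\chi)=\varphi(q)^{-1}\sum_{g\in(\Z/q\Z)^\times}H(g)^m$ with $H(g)=\sum_{\chi\bmod q}\chi(g)L(\chi,1/2)$, and one evaluates $H(g)$ in closed form: from $\sum_\chi\chi(g)L(\chi,s)=\varphi(q)q^{-s}\zeta(s,\rho(g)/q)$ (valid by analytic continuation from $\Re s>1$, with $\zeta(s,\cdot)$ the Hurwitz zeta function and $\rho(g)\in\{1,\dots,q-1\}$ the least residue of $g^{-1}$) and the fact that $g\mapsto\rho(g)$ is a bijection onto $\{1,\dots,q-1\}$, this gives $\varphi(q)^{-(m-1)}\sum_{\wide(q,m;1)}\Lambda=q^{-m/2}\sum_{r=1}^{q-1}\zeta(1/2,r/q)^m$; the tuples with a trivial component separating $\wide$ from $\wide^*$ contribute only $O(m\,\varphi(q)^{m-2}\log q)$. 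Using $\zeta(1/2,r/q)=q^{1/2}r^{-1/2}+\zeta(1/2)+O(r/q)$ and $m\ge3$, this power-sum is governed by its first term, so $\mathcal M_1$ is $\varphi(q)^{m-1}$ times a quantity $A(q,m)$ (essentially $(1+\zeta(1/2)q^{-1/2})^m$) whose size one tracks explicitly — equivalently $A(q,m)$ is what Theorem~\ref{thm:asympexpintro} computes for $\psi_i\equiv1$, $q$ prime. Keeping the relative error here uniform as $m\to\infty$ already forces $mq^{-1/2}$ to be $o(\log q)$, which is where the range $m=o(q^{1/2}\log q)$ first appears.

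The same manipulation gives $\mathcal M_2=\varphi(q)^{m-1}\sum_{g\in(\Z/q\Z)^\times}\mathcal B_q(g)^m$, where $\mathcal B_q(g)=\varphi(q)^{-1}\sum_{\chi\bmod q}\chi(g)|L(\chi,1/2)|^2$ is the twisted second moment. Here I would invoke Bettin's evaluation of exactly this object \cite{Be17}: for $q$ prime $\mathcal B_q(g)$ has a main term of size $O\!\big((\log q)/\sqrt{\operatorname{ht}(g)}\big)$, where $\operatorname{ht}(g)=hk$ is attached to the best rational approximation $g/q\approx h/k$ (i.e.\ $g\equiv h\overline k\bmod q$), together with a lower-order ``cotangent-sum'' term and a power-saving error, so in particular $\mathcal B_q(1)=\log q+O(1)$. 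Since $\#\{g:\operatorname{ht}(g)=r\}\ll d(r)$ and $\sum_{r\ge1}d(r)r^{-m/2}=\zeta(m/2)^2\le\zeta(3/2)^2$ for every $m\ge3$, summing over $g$ yields $\sum_g\mathcal B_q(g)^m\ll(\log q)^m$, hence $\mathcal M_2\ll\varphi(q)^{m-1}(\log q)^m$. It is precisely here that $m\ge3$ is indispensable: $\sum_r d(r)/r$ diverges, which is why $m=2$ sits at a different order of magnitude (the fourth moment, $\asymp q(\log q)^4$).

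The hard part is not the Fourier/orthogonality bookkeeping but the uniformity in $m$: one must control the contribution of Bettin's error term to the high power-sum $\sum_g\mathcal B_q(g)^m$ as $m$ grows with $q$, and one must check that the constant in $\mathcal B_q(1)=\log q+O(1)$ is compatible with the quantity $A(q,m)$ coming from the first moment, so that the final ratio $A(q,m)^2(\log q)^m/\mathcal B_q(1)^m$ stays $\gg1$ rather than bleeding an extra exponential-in-$m$ factor; this compatibility — which hinges on the exact shape of Bettin's asymptotic — is what calibrates the admissible range to $m=o(q^{1/2}\log q)$. Granting these two uniform estimates, Cauchy--Schwarz together with $\varphi(q)\asymp q$ yields $\#\{\underline\chi\in\wide^*(q,m;1):\Lambda(\underline\chi)\ne0\}\gg q^{m-1}/(\log q)^m$ throughout that range.
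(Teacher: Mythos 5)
Your top-level strategy is exactly the paper's: Cauchy--Schwarz reduces the non-vanishing count to the ratio $|\mathcal M_1|^2/\mathcal M_2$, the first moment $\mathcal M_1$ is evaluated by opening the constraint $\prod_i\chi_i=1$ via orthogonality and passing to the Hurwitz zeta function (this matches Corollary~\ref{cor:uniforminm} and the Euler--Maclaurin machinery behind Theorem~\ref{thm:uniforminm}, including the inclusion--exclusion to strip principal characters), and the second moment is to be supplied by Bettin \cite{Be17}. The admissible range $m=o(q^{1/2}\log q)$ indeed comes from the second moment, exactly as you say.

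The genuine difference lies in how you propose to handle $\mathcal M_2$, and this is also where the proposal has a real gap. The paper does \emph{not} open the second moment into a twisted-second-moment power sum $\sum_g\mathcal B_q(g)^m$; it cites Bettin's result already in wide-moment form, namely the uniform asymptotic
\[
\frac{1}{(q-2)^{m-1}}\sideset{}{^*}\sum_{\substack{\chi_1,\ldots, \chi_m \, (q),\\ \chi_1\cdots \chi_m=1}} \prod_{i=1}^m|L(\chi_i, 1/2)|^2\sim \frac{\zeta(m/2)^2}{\zeta(m)}\bigl(\log (q/8\pi)+\gamma\bigr)^m,
\]
valid uniformly for $3\le m=o(q^{1/2}\log q)$, which already packages all of the uniformity-in-$m$ bookkeeping you label ``the hard part''. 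Your route instead asserts a pointwise bound of the shape $\mathcal B_q(g)\ll(\log q)/\sqrt{\operatorname{ht}(g)}$ and a divisor-weighted count of $g$ by height; as written this is not a theorem of \cite{Be17}, and even granting some version of it, controlling $\sum_g\mathcal B_q(g)^m$ with constants that do not bleed an $e^{O(m)}$ factor is precisely the uniform-in-$m$ estimate you acknowledge but do not prove. Since the paper's argument becomes a two-line computation once one imports Bettin's wide-moment asymptotic (using only $\zeta(m/2)^2/\zeta(m)\ll1$ and $(\log(q/8\pi)+\gamma)^m\le(\log q)^m$), I would recommend replacing the $\mathcal B_q(g)$ detour by a direct citation of that asymptotic; otherwise you are attempting to re-prove a substantial input with a sketch that does not yet close.
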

\begin{remark}
The restriction to prime moduli in Theorem \ref{cor:nonvaniPY} is not essential, but we have restricted to this case for simplicity of exposition.
\end{remark}
\subsection*{Acknowledgements}
The author's research was supported by the German Research Foundation under Germany's Excellence Strategy EXC-$2047/1$-$390685813$, as well as the Independent Research Fund Denmark DFF-1025-00020B.
\section{Analytic properties of Lerch zeta functions}
The results of this paper follows by a detailed study of the \emph{Lerch zeta function}, which for $\alpha\in \R$, $c>0$ and $\Re s>1$ is defined as the following Dirichlet series: 
\begin{equation}\label{eq:defLerch} \zeta(\alpha, c, s)= \sum_{n\geq 0}\frac{e(n\alpha)}{(n+c)^s}, \quad e(x)=e^{2\pi i x}. \end{equation}
The Lerch zeta function admits meromorphic continuation to the entire complex plane and satisfies the following functional equation \cite[Theorem 3.2]{LauGar02}
\begin{equation}\label{eq:FE}	
 \zeta(\alpha, c, 1-s)= \frac{\Gamma(s)}{(2\pi)^s}\left( e\left( s/4-\alpha c\right)\zeta(-c, \alpha, s)+ e\left(- s/4+(1-\alpha) c\right)\zeta(c, 1-\alpha, s)  \right),
\end{equation}
valid for $0<\alpha<1$ and $c>0$. The analytic continuation to $\Re s>0$ for $\alpha\notin \Z$ is given by the following elementary {\lq\lq}period integral{\rq\rq}:
\begin{equation}\label{eq:intrep} \zeta(\alpha, c, s)=\Gamma(s)^{-1}\int_0^\infty t^{s-1} \frac{e^{-ct}}{1-e(\alpha)e^{-t}}dt,\end{equation}
as can be seen easily by writing $(1-e(\alpha)e^{-t})^{-1}=\sum_{n\geq 0}e(n\alpha)e^{-tn}$. Another expression for the analytic continuation can be obtained by using a Taylor expansion (as we will see below). Both expressions will be useful in order to extract analytic properties. 
\begin{prop}\label{prop:taylor} For $\alpha\in \Z$, $0<c<1$ and $\Re s>0$, we have the following absolutely and locally uniformly convergent series representation (with a pole at $s=1$ for $\alpha \in \Z$):  
\begin{equation}\label{eq:expinc} \zeta(\alpha, c, s)=c^{-s}+e(\alpha)\sum_{n\geq 0} (-c)^n \binom{s+n-1}{n} \zeta(\alpha,1, s+n). \end{equation}
In particular, we have for $0<\alpha,c<1$:
\begin{equation}\label{eq:expinc2}  \zeta(\alpha, c, 1/2)= \frac{1+i}{2} \alpha^{ -1/2}+ \frac{1-i}{2}(1-\alpha)^{-1/2}+c^{- 1/2} +\sum_{n\geq 0}  C_n(\alpha)c^n,\end{equation}
where the coefficients satisfy the bound $C_n(\alpha)\ll n^{-1/2}$ uniformly in $\alpha$.

\end{prop}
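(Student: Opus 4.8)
The plan is to establish (\ref{eq:expinc}) first in the region $\Re s>1$, where everything converges absolutely, and then to extend it to $\Re s>0$ by analytic continuation; equation (\ref{eq:expinc2}) is then obtained by setting $s=1/2$ and isolating the single summand responsible for the blow-up near $\alpha\in\{0,1\}$. (I would prove (\ref{eq:expinc}) for arbitrary real $\alpha$; the pole at $s=1$ occurs exactly when $\alpha\in\Z$, in which case the statement reduces to the familiar first-term expansion of the Hurwitz zeta function.) Working in $\Re s>1$, I separate the $n=0$ term $c^{-s}$ and, for $n\geq 1$ (so that $0<c<1\leq n$), expand $(n+c)^{-s}=\sum_{k\geq 0}\binom{-s}{k}c^{k}n^{-s-k}$, and then interchange the two summations. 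This is legitimate because the double series is absolutely convergent: $\sum_{k\geq 0}\bigl|\binom{-s}{k}\bigr|c^{k}<\infty$ for $c<1$, and $\sum_{n\geq 1}n^{-\Re s-k}\leq\zeta(\Re s)$. Using $\binom{-s}{k}=(-1)^{k}\binom{s+k-1}{k}$ together with $\sum_{n\geq 1}e(n\alpha)n^{-s-k}=e(\alpha)\zeta(\alpha,1,s+k)$ then gives (\ref{eq:expinc}) for $\Re s>1$.

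Next I would verify that the right-hand side of (\ref{eq:expinc}) converges locally uniformly on $\Re s>0$ and hence defines a meromorphic function there. The $k=0$ summand $e(\alpha)\zeta(\alpha,1,s)$ is, by the meromorphic continuation of the Lerch zeta function already recorded above, holomorphic on $\Re s>0$ apart from a simple pole at $s=1$ present exactly when $\alpha\in\Z$. For $k\geq 1$ one has $\Re(s+k)>1$, hence $|\zeta(\alpha,1,s+k)|\leq\zeta(\Re s+k)\ll 1$, while Stirling's formula gives $\bigl|\binom{s+k-1}{k}\bigr|\ll_{s}k^{\Re s-1}$, so the tail $\sum_{k\geq 1}$ is dominated by $\sum_{k\geq 1}k^{\Re s-1}c^{k}$, which converges locally uniformly in $s$ since $0<c<1$. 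Both sides of (\ref{eq:expinc}) are therefore meromorphic on $\Re s>0$ with the same (at worst simple) pole at $s=1$ and agree for $\Re s>1$, so they agree throughout; in particular the $n$-series converges absolutely and locally uniformly, as asserted.

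For (\ref{eq:expinc2}) I would put $s=1/2$ in (\ref{eq:expinc}). For $n\geq 1$, set $C_{n}(\alpha)=(-1)^{n}\binom{n-1/2}{n}e(\alpha)\zeta(\alpha,1,n+1/2)$: since $\binom{n-1/2}{n}=\Gamma(n+\tfrac12)/(\Gamma(\tfrac12)\Gamma(n+1))\ll n^{-1/2}$ by Stirling and $|e(\alpha)\zeta(\alpha,1,n+1/2)|=\bigl|\sum_{m\geq 1}e(m\alpha)m^{-n-1/2}\bigr|\leq\zeta(n+\tfrac12)\leq\zeta(\tfrac32)$, these satisfy $C_{n}(\alpha)\ll n^{-1/2}$ uniformly in $\alpha$. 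The remaining summand $e(\alpha)\zeta(\alpha,1,1/2)$ is the one that is unbounded as $\alpha\to 0$ or $\alpha\to 1$. Applying the functional equation (\ref{eq:FE}) with $c=1$, so that $\zeta(-1,\alpha,s)=\zeta(s,\alpha)$ and $\zeta(1,1-\alpha,s)=\zeta(s,1-\alpha)$ are Hurwitz zeta functions, and then putting $s=1/2$, yields Hurwitz's formula
\[
  e(\alpha)\,\zeta\!\left(\alpha,1,\tfrac12\right)=\tfrac{1+i}{2}\,\zeta\!\left(\tfrac12,\alpha\right)+\tfrac{1-i}{2}\,\zeta\!\left(\tfrac12,1-\alpha\right).
\]
The first-term expansion $\zeta(\tfrac12,\alpha)=\alpha^{-1/2}+\sum_{n\geq 0}\binom{-1/2}{n}\zeta(\tfrac12+n)\alpha^{n}$ — which is the special case of (\ref{eq:expinc}) with integral first argument — exhibits $\zeta(\tfrac12,\alpha)-\alpha^{-1/2}$ as a function that extends continuously to $\alpha\in[0,1]$ (with endpoint values $\zeta(\tfrac12)$ and $\zeta(\tfrac12)-1$) and is therefore bounded on $(0,1)$. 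Substituting this, the terms $\tfrac{1+i}{2}\alpha^{-1/2}$ and $\tfrac{1-i}{2}(1-\alpha)^{-1/2}$ split off and the leftover $C_{0}(\alpha):=\tfrac{1+i}{2}\bigl(\zeta(\tfrac12,\alpha)-\alpha^{-1/2}\bigr)+\tfrac{1-i}{2}\bigl(\zeta(\tfrac12,1-\alpha)-(1-\alpha)^{-1/2}\bigr)$ is bounded uniformly in $\alpha$; this is (\ref{eq:expinc2}).

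Steps one and two amount to a routine rearrangement followed by analytic continuation. The genuinely delicate point is the last step: one must recognise that all of the non-uniform behaviour is concentrated in the single summand $e(\alpha)\zeta(\alpha,1,1/2)$, extract its two singular pieces via Hurwitz's formula and the first-term expansion of the Hurwitz zeta function, and — the real obstacle — check that the remainder stays bounded uniformly in $\alpha$ up to the endpoints $\alpha=0,1$.
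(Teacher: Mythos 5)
Your proof is correct and follows essentially the same route as the paper: you derive (\ref{eq:expinc}) for $\Re s>1$ by binomially expanding $(n+c)^{-s}$ and interchanging sums, where the paper Taylor-expands $\zeta(\alpha,c+1,s)$ around $c=0$ (formally the same computation), and then both extend to $\Re s>0$ and split off the $n=0$ term at $s=1/2$ via the functional equation (\ref{eq:FE}) specialised to $c=1$, reducing matters to the first-term expansion of the Hurwitz zeta function. One small point in your favour: your Stirling estimate $\bigl|\binom{s+n-1}{n}\bigr|\ll_{s}n^{\Re s-1}$ is the right bound for convergence on the whole range $0<c<1$, whereas the paper's stated bound $\binom{s+n-1}{n}\leq|s|^{n}$ both fails for $|s|<1$ and, taken at face value, only gives convergence when $c<1/|s|$.
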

\begin{proof}
We have by termwise differentiation for $\Re s>1$:
$$  \frac{\partial}{\partial c}\zeta(\alpha, c, s)= -s\zeta(\alpha, c, s+1). $$
By writing $\zeta(\alpha, c, s)=c^{-s}+e(\alpha)\zeta(\alpha, c+1, s)$, we conclude (\ref{eq:expinc}) for $\Re s>1 $ by Taylor expanding $\zeta(\alpha, c+1, s)$ around $c=0$. Now using the trivial bound $ \binom{s+n-1}{n}\leq |s|^n$ together with $ \zeta(\alpha,1, s+n)\ll_\eps 1$ for $\Re s+n>1+\eps$, we see that (\ref{eq:expinc}) converges absolutely and locally uniformly for $\Re s>0$ to a meromorphic function in $s$ with poles exactly where $\zeta(\alpha,1, s)$ has poles. For $\alpha\notin \Z$, it follows from (\ref{eq:intrep}) that    
$\zeta(\alpha,1, s)$ is analytic in the half-plane $\Re s>0$ and for $\alpha\in \Z$ there is a unique simple pole at $s=1$ as $\zeta(0,1,s)=\zeta(s)$ is the Riemann zeta function.

In particular it follows from (\ref{eq:expinc}) with $\alpha=0$ and $s=1/2$ that we have 
$$\zeta(0, c, 1/2)=c^{-1/2}+g_1(c),$$ 
where $g_1:[0,\infty)\rightarrow \C$ is continuous and locally analytic (also at $c=0$). Using the functional equation (\ref{eq:FE}) this implies that for $\alpha\in (0,1)$:
$$\zeta(\alpha, 1, 1/2)=\frac{\Gamma(1/2)}{(2\pi)^{1/2}} \left(e(1/8) \alpha^{-1/2}+ e(-1/8) (1-\alpha)^{-1/2}\right)+g_2(\alpha),$$ where $g_2(\cdot):[0,1]\rightarrow \C$ is a continuous functions which is locally analytic.

Plugging this back into (\ref{eq:expinc}) (now with a general $\alpha$) we conclude (\ref{eq:expinc2}). The bound for the coefficients $C_n(\alpha)$ follows from Stirling's formula. 
\end{proof}

\subsection{Lerch zeta functions as Fourier transforms of Dirichlet $L$-functions}
The connection between Lerch zeta functions and Dirichlet $L$-functions is a very classical topic (see e.g. \cite[Chapter 9]{Davenport00}). Below we state a general form of this connection, which can be seen as a $\GL_1$-version of the so-called Birch--Stevens formula (see \cite[Proposition 6.1]{Nordentoft20.4}).  
\begin{prop}\label{prop:BirchStevens}
Let $\chi$ and $\psi$ be Dirichlet characters modulo $q$ and $\ell $, respectively. Then we have
\begin{align}
\sum_{a\modulo q}\, \sum_{1\leq b \leq \ell } \chi(a)\psi(b) e(ab/q)\zeta\left( \frac{a\ell }{q}, \frac{b}{\ell },s\right)= \ell ^{s}\tau(\chi^*)\nu_s(\chi^*, \psi, q/q^*) L(\overline{\chi^*} \psi, s),
\end{align}
for all $s\in \C$, where $\chi^*$ modulo $q^*$ is the unique primitive Dirichlet character inducing $\chi$. The weight $\nu_s$ is given by the following convolution product:
\begin{equation}  \nu_s(\chi^*, \psi,  q/q^*) := \left[((\cdot )^{1-s} \psi)\ast (\chi^* \mu)\right](q/q^*)=\sum_{d| q/q^*} d^{1-s} \psi(d) \mu(q/(q^* d))\chi^*(q/(q^* d)). \end{equation}
 This formula also holds if either $q=1$ or $\ell =1$.
\end{prop}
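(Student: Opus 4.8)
The plan is to prove the identity first for $\Re s > 1$, where every Lerch zeta function on the left-hand side is given by its absolutely convergent Dirichlet series, and then to extend it to all $s \in \C$ by meromorphic continuation: both sides are finite $\C$-linear combinations of functions meromorphic on all of $\C$ (Lerch zeta functions on the left, a single Dirichlet $L$-function times entire factors on the right), so agreement on a half-plane forces agreement everywhere.

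For $\Re s > 1$ I would substitute $\alpha = a\ell/q$ and $c = b/\ell$ into $\zeta(\alpha,c,s) = \sum_{n\ge 0} e(n\alpha)(n+c)^{-s}$, clearing the denominator as $\zeta\!\left(\tfrac{a\ell}{q},\tfrac{b}{\ell},s\right) = \ell^{s}\sum_{n\ge 0} e(na\ell/q)\,(n\ell+b)^{-s}$. Inserting this into the left-hand side and reindexing the double sum over $n\ge 0$ and $1\le b\le \ell$ by $k = n\ell + b$ (a bijection onto the positive integers), one has $\psi(b) = \psi(k)$ and $e(ab/q)\,e(na\ell/q) = e(ak/q)$, so after interchanging the (absolutely convergent) sums the left-hand side becomes
\[
  \ell^{s}\sum_{k\ge 1}\frac{\psi(k)}{k^{s}}\,\tau_k(\chi),\qquad \tau_k(\chi):=\sum_{a\bmod q}\chi(a)\,e(ak/q).
\]

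The core step is the evaluation of the twisted Gauss sum $\tau_k(\chi)$ for the possibly imprimitive character $\chi$. Writing $\chi = \chi^{*}\chi_0$ with $\chi_0$ the principal character modulo $q$, detecting $\gcd(a,q)=1$ by $\chi_0(a)=\sum_{d\mid\gcd(a,q)}\mu(d)$, and substituting $a = da'$, one is left with Gauss sums of $\chi^{*}$; only squarefree $d\mid m:=q/q^{*}$ coprime to $q^{*}$ contribute, and splitting $a'\bmod q/d$ along residues modulo $q^{*}$ and invoking the separability $\sum_{t\bmod q^{*}}\chi^{*}(t)e(tn/q^{*}) = \overline{\chi^{*}}(n)\tau(\chi^{*})$ of the primitive character $\chi^{*}$ then gives, after setting $e=m/d$,
\[
  \tau_k(\chi) = \tau(\chi^{*})\sum_{\substack{e\mid m\\ e\mid k}}\mu(m/e)\,\chi^{*}(m/e)\,e\,\overline{\chi^{*}}(k/e).
\]
I expect this Gauss-sum bookkeeping — in particular keeping straight the several coprimality conditions involving $q^{*}$, $\ell$ and $m$, which can overlap in general — to be the main, though still elementary, obstacle.

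Substituting this back, swapping the $k$- and $e$-sums, and writing $k = ek'$ factors the expression: complete multiplicativity $\psi(ek') = \psi(e)\psi(k')$ turns the $e$-sum into $\sum_{e\mid m} e^{\,1-s}\psi(e)\mu(m/e)\chi^{*}(m/e) = \nu_s(\chi^{*},\psi,q/q^{*})$, while the remaining $k'$-sum is $\sum_{k'\ge 1}\overline{\chi^{*}}(k')\psi(k')(k')^{-s} = L(\overline{\chi^{*}}\psi,s)$. This is the asserted identity for $\Re s>1$, hence for all $s$. The degenerate cases $q=1$ and $\ell=1$ are covered by the same computation (with $\tau(\chi^{*})=1$ and $\nu_s=1$ when $q=1$, and a single term $b=1$ when $\ell=1$), which I would nonetheless spell out briefly for completeness.
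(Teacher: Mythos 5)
Your proof is correct and follows essentially the same route as the paper's: rewrite each Lerch value via its Dirichlet series as a sum over $n\equiv b\pmod\ell$, interchange the absolutely convergent sums to isolate the twisted Gauss sum $\tau_k(\chi)=\sum_{a\bmod q}\chi(a)e(ak/q)$, evaluate it for the possibly imprimitive $\chi$ via M\"obius inversion and separability of $\chi^*$, and extend from $\Re s>1$ to all $s$ by uniqueness of analytic continuation. The only difference is cosmetic --- the paper defers the Gauss-sum bookkeeping to a cited proposition of an earlier paper, whereas you carry it out explicitly (correctly, as far as I can check).
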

\begin{proof}
We note that for $\Re s>1$ and $1\leq b \leq \ell $:
$$  \ell^{-s}e(ab/q )\zeta\left( \frac{a\ell}{q}, \frac{b}{\ell },s\right) = \sum_{\substack{n\geq 1\\ n\equiv b\modulo \ell}} e\left(\frac{na}{q}\right)n^{-s}, $$
and thus the identity follows when $\Re s>1$ by interchanging the sums and using basic facts about Gau{\ss} sums as in the proof of \cite[Proposition 6.1]{Nordentoft20.4}. Now the result follows for $s\in \C$ by uniqueness of analytic continuation.  
\end{proof}

\subsection{Twisted version of the Lerch zeta function}
Since we have the two directions $(\alpha, c)$ to vary, we can use Proposition \ref{prop:BirchStevens} to derive the analytic properties of \emph{twisted} versions of Lerch zeta functions. 

Let $\chi$ be a primitive Dirichlet character modulo $q$ and let $b,\ell$ be integers with $(b,\ell)=1$. Then we define the associated \emph{twisted Hurwitz zeta function} as: 
\begin{align} L(\chi, b/ \ell, s):=  \ell^s \sum_{\substack{n\geq 1,\\ n\equiv b\modulo \ell}} \frac{\chi(n)}{n^s}, \end{align}
which converges absolutely for $\Re s>1$. By Proposition \ref{prop:BirchStevens} it is easy to see that  
\begin{equation}\label{eq:periodformula1} L(\chi, b/\ell, s)= \tau(\overline{\chi})^{-1}\sum_{a\modulo q} \overline{\chi}(a) e(ab/q) \zeta(a\ell/q,b/\ell,s),  \end{equation}
which shows that $L(\chi, b/\ell, s)$ admits meromorphic continuation to the entire complex plane. 

Similarly given a Dirichlet character $\psi$ modulo $q$, we define the \emph{twisted periodic zeta function}:
\begin{equation}
L(\alpha, \psi, s):=   \sum_{n\geq 1} \frac{e(n\alpha)\psi(n)}{n^s},\quad \Re s>1
\end{equation}
As above we have meromorphic continuation given by:
\begin{equation}\label{eq:MCtwistedperiodic}
L(\alpha, \psi, s)=\ell^{-s}\sum_{b=1}^\ell \psi(b) e(b\alpha) \zeta(\ell\alpha,b/\ell,s)
\end{equation}
Also we see that $(0,1)\ni \alpha\mapsto L(\alpha, \psi, s)$ defines a continuous function using the above observations about the Lerch zeta function. 

The two different twisted versions of the Lerch zeta function have quite different analytic properties as can be seen from the following propositions. 


\begin{prop}\label{prop:expansiontwisted}
Let $\chi$ be a primitive Dirichlet character modulo $q$, then we have for $(\ell,q)=1$ and $1\leq b<\ell$:
\begin{equation}\label{eq:expansiontwisted}  L(\chi, b/\ell, 1/2)= \chi(b)(b/\ell)^{-1/2}+\sum_{n\geq 0} C_{n,\chi}(b, \ell \modulo q)(b/\ell)^n,   \end{equation}
where $C_{n,\chi}(b, \ell \modulo q)\ll (q/n)^{1/2}$ are certain coefficients depending only on $b, \ell$ modulo $q$.

Furthermore, we have for any $\eps>0$
\begin{equation}
\label{eq:asympL} L(\chi, b/\ell, 1/2)= \chi(b)(b/\ell)^{-1/2}+O_\eps(q^{1/4+\eps}),
\end{equation}
uniformly for $1\leq b\leq \ell$.
\end{prop}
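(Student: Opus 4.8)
\textbf{Proof proposal for Proposition \ref{prop:expansiontwisted}.}
The plan is to reduce the twisted Hurwitz zeta function $L(\chi,b/\ell,1/2)$ to a sum over $a\bmod q$ of ordinary Lerch zeta values $\zeta(a\ell/q, b/\ell, 1/2)$ via the meromorphic-continuation formula \eqref{eq:periodformula1}, and then to plug in the Taylor expansion \eqref{eq:expinc2} from Proposition \ref{prop:taylor}, treating $c=b/\ell$ as the small parameter and $\alpha = a\ell/q$ as the parameter that ranges over the fractions with denominator $q$. First I would separate the $a\equiv 0\bmod q$ term (if it occurs, i.e. it does not since $1\le a\le q$ ranges over nonzero residues when $\chi$ is primitive, but the term $\alpha\in\Z$ must still be handled if $\ell\alpha\in\Z$); more precisely, since $(\ell,q)=1$ the quantity $a\ell/q$ is a non-integer for every $a\not\equiv 0$, so \eqref{eq:expinc2} applies verbatim with $\alpha = \{a\ell/q\}$. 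Substituting \eqref{eq:expinc2} into \eqref{eq:periodformula1} gives three pieces: the two ``singular'' terms $\tfrac{1\pm i}{2}\{a\ell/q\}^{\mp}$ (here I am abbreviating the $\alpha^{-1/2}$ and $(1-\alpha)^{-1/2}$ contributions), the term $c^{-1/2}=(b/\ell)^{-1/2}$, and the convergent power series $\sum_{n\ge 0} C_n(\{a\ell/q\})(b/\ell)^n$.

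The key computation is that the $(b/\ell)^{-1/2}$ term, after being multiplied by $\tau(\overline\chi)^{-1}\sum_{a\bmod q}\overline\chi(a)e(ab/q)$, collapses to exactly $\chi(b)(b/\ell)^{-1/2}$: indeed $\tau(\overline\chi)^{-1}\sum_a \overline\chi(a) e(ab/q) = \chi(b)$ by the standard Gauss-sum identity for primitive $\chi$ (valid for all $b$, including $b$ not coprime to $q$, since $\chi$ is primitive). This produces the leading term in \eqref{eq:expansiontwisted}. The remaining two pieces then get absorbed into the coefficients $C_{n,\chi}(b,\ell\bmod q)$; one must check that the ``singular'' pieces $\tfrac{1\pm i}{2}\{a\ell/q\}^{\mp 1/2}$, which do not obviously depend polynomially on $b/\ell$, can nevertheless be folded in. The point is that these terms are \emph{independent of $b$ as a real number} — they depend on $b$ only through $\{a\ell/q\}$, hence only on $b,\ell \bmod q$ — so they contribute to the ``constant'' ($n=0$) coefficient $C_{0,\chi}$. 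Wait — that is not quite right, because $e(ab/q)$ in \eqref{eq:periodformula1} does depend on $b$; but $e(ab/q)$ depends on $b$ only mod $q$ as well, so the whole expression $\tau(\overline\chi)^{-1}\sum_a \overline\chi(a)e(ab/q)\cdot[\text{stuff depending on } a,\ell\bmod q]$ depends on $b$ only through its residue mod $q$, which is exactly what is claimed for $C_{n,\chi}(b,\ell\bmod q)$. Collecting the series in powers of $b/\ell$: the $C_n(\{a\ell/q\})(b/\ell)^n$ piece contributes the honest degree-$n$ terms, and the singular-plus-Gauss-sum remainder contributes only to $n=0$. For the coefficient bound, I would use $C_n(\alpha)\ll n^{-1/2}$ from Proposition \ref{prop:taylor} together with the trivial bound $|\tau(\overline\chi)^{-1}\sum_a \overline\chi(a)e(ab/q)\,C_n(\{a\ell/q\})| \le q^{1/2}\cdot q\cdot \max_\alpha|C_n(\alpha)| / \sqrt q \asymp \sqrt q\cdot \sqrt q \cdot n^{-1/2}$... here I need $|\tau(\overline\chi)|=\sqrt q$ so $|\tau(\overline\chi)^{-1}|=q^{-1/2}$, giving $q^{-1/2}\cdot q\cdot n^{-1/2} = q^{1/2}n^{-1/2} = (q/n)^{1/2}$, matching the stated bound.

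For the second assertion \eqref{eq:asympL}, I would simply bound the tail $\sum_{n\ge 0} C_{n,\chi}(b,\ell\bmod q)(b/\ell)^n$ using $|b/\ell|\le 1$ and the coefficient bound: $\sum_{n\ge 0}(q/n)^{1/2}(b/\ell)^n$ diverges, so this crude approach fails and one must be more careful — split the series at $n\le N$ and $n>N$, using $(b/\ell)\le 1$, but since we allow $b=\ell$ exactly (the range $1\le b\le \ell$), $(b/\ell)^n$ need not decay. Here the fix is to note that for $b=\ell$ one has $L(\chi,1,1/2)$ which is a twisted periodic zeta value at $\alpha\in\Z$, handled separately, while for $b<\ell$ one has geometric-type decay once $b/\ell$ is bounded away from $1$, and for $b/\ell$ close to $1$ one invokes the functional equation \eqref{eq:FE} / the $(1-\alpha)^{-1/2}$ symmetry to swap the roles, as was already done inside the proof of Proposition \ref{prop:taylor}. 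A cleaner route: apply \eqref{eq:expinc2} but keep only the first two explicit terms plus $c^{-1/2}$ and bound $\sum_n C_n(\alpha)c^n \ll 1$ uniformly for $c\in(0,1)$ (which follows from $C_n\ll n^{-1/2}$ and Abel summation / the fact that it is a convergent power series representing a function continuous up to $c=1$), then transport this $O(1)$-bound through \eqref{eq:periodformula1}, paying $q^{-1/2}\cdot q = q^{1/2}$... that gives $q^{1/2}$, not $q^{1/4+\eps}$.

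\textbf{The main obstacle.} The genuinely hard part is obtaining the \emph{savings} in \eqref{eq:asympL}: a term-by-term triangle-inequality estimate only yields $O(q^{1/2})$, whereas the claim is $O_\eps(q^{1/4+\eps})$. Getting from $q^{1/2}$ to $q^{1/4+\eps}$ requires exploiting cancellation in the character sum $\sum_{a\bmod q}\overline\chi(a)e(ab/q)C_n(\{a\ell/q\})$ beyond the trivial bound — essentially a Polya–Vinogradov or completion-method argument treating $\alpha\mapsto C_n(\alpha)$ (equivalently the error term $\zeta(\alpha,c,1/2)-(\text{explicit part})$ as a function of $\alpha$) as a ``nice'' weight whose Fourier coefficients decay, so that the sum is governed by $\sqrt q$ times the total-variation or $\ell^1$-Fourier norm of that weight, and then the $\tau(\overline\chi)^{-1}$ factor supplies the remaining $q^{-1/2}$, with the $q^\eps$ absorbing the logarithmic loss in completion. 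I would structure this as: (i) write the error term $E(\alpha) := \zeta(\ell\alpha,b/\ell,1/2) - \chi(b)$-type-contribution as a function on $\R/\Z$ that is piecewise analytic with controlled derivatives, (ii) expand $E$ in a Fourier series on $\R/\Z$ with rapidly decaying coefficients away from a short range, (iii) insert into the completed character sum and use orthogonality, reducing to a short sum estimated trivially. The delicate book-keeping is uniformity in $\ell$ and in $b$ up to the endpoint $b=\ell$, handled by the $\alpha\leftrightarrow 1-\alpha$ reflection already present in \eqref{eq:FE}.
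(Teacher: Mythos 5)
\textbf{Part 1 (the expansion \eqref{eq:expansiontwisted}).} Your argument here matches the paper's proof essentially exactly: substitute \eqref{eq:expinc2} into \eqref{eq:periodformula1}, observe that all coefficients depend on $b,\ell$ only through their residues mod $q$, extract the main term $\chi(b)(b/\ell)^{-1/2}$ via the Gauss-sum identity $\tau(\overline\chi)^{-1}\sum_a\overline\chi(a)e(ab/q)=\chi(b)$ for primitive $\chi$, and deduce the coefficient bound $(q/n)^{1/2}$ from $C_n(\alpha)\ll n^{-1/2}$ and $|\tau(\overline\chi)|=q^{1/2}$. Fine.

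\textbf{Part 2 (the bound \eqref{eq:asympL}) has a genuine gap.} You correctly diagnose the obstacle — the trivial estimate only gives $O(q^{1/2})$, and saving a further $q^{1/4}$ requires cancellation in the $a$-sum — and you correctly name P\'{o}lya--Vinogradov as the relevant tool. But the route you propose would not work as stated: you suggest bounding the sum by ``$\sqrt q$ times the total-variation or $\ell^1$-Fourier norm of the weight.'' The weight here is $\alpha\mapsto f(\alpha,c):=\zeta(\alpha,c,1/2)-c^{-1/2}$, which behaves like $\alpha^{-1/2}$ (resp.\ $(1-\alpha)^{-1/2}$) near the endpoints, so its derivative is $\asymp\alpha^{-3/2}$; this has \emph{infinite} total variation on $(0,1)$, and its Fourier coefficients decay only like $n^{-1/2}$, so the $\ell^1$-Fourier norm is likewise infinite. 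A straight Abel summation against $\sup_x|S(x,\cdot)|\cdot TV(f)$ gives nothing. Your subsequent three-step ``completion method'' outline is plausible in spirit but is never reduced to a concrete estimate, and you do not identify the mechanism that tames the non-integrable singularity.

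The paper's actual argument is a partial-summation argument with a \emph{split at $x\asymp q^{1/2}$}. After the change of variable $a\mapsto a\overline\ell$, one writes the $a$-sum as $S(\lfloor q/2\rfloor,\cdot)f(\lfloor q/2\rfloor/q,\cdot)-\frac1q\int_1^{\lfloor q/2\rfloor}S(x,\cdot)\,\partial_xf(x/q,\cdot)\,dx$ where $S(x,r)=\sum_{a\le x}\overline\chi(a)e(ar/q)$, then uses $S(x,r)\ll x$ (trivial) on the range $1\le x\le q^{1/2}$ where $\partial_\alpha f$ is large, and P\'{o}lya--Vinogradov $S(x,r)\ll q^{1/2+\eps}$ on $q^{1/2}\le x\le q/2$ where the derivative is tame. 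The two ranges each contribute $\ll q^{3/4+\eps}$, and dividing by $|\tau(\overline\chi)|=q^{1/2}$ yields $q^{1/4+\eps}$. The crucial input you are missing is the derivative bound $\partial_\alpha f(\alpha,c)\ll\alpha^{-3/2}+(1-\alpha)^{-3/2}$ together with the strategy of interpolating between the trivial and P\'{o}lya--Vinogradov bounds on $S(x,r)$ across the scale $x\asymp q^{1/2}$; without this dyadic/threshold idea the singularity at $\alpha\to0^+$ defeats any uniform-in-$x$ bound. (The reflection contribution $\lfloor q/2\rfloor<a<q$ is handled symmetrically.)
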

\begin{proof}
Combining (\ref{eq:periodformula1}) and (\ref{eq:expinc2}), we get
\begin{align*}&L(\chi, b/\ell, 1/2)\\
&= \tau(\overline{\chi})^{-1}\sum_{a\modulo q} \overline{\chi}(a) e(ab/q) \Biggr( (b/\ell)^{-1/2}+\frac{1+i}{2}\{ a\ell/q\}^{-1/2}+\frac{1-i}{2}(1-\{ a\ell/q\})^{-1/2}\\
&\qquad \qquad\qquad \qquad\qquad \qquad +\sum_{n\geq 0}  C_n(\{ a\ell/q\}) (b/\ell)^n\Biggr),\end{align*}
where $\{x\}$ denotes the fractional part of $x$. We notice that the coefficients above only depend on $b,\ell$ modulo $q$. We get the wanted main term using standard properties of Gau{\ss} sums. Finally the bound on the coefficients follows directly by using the bound $C_n(\{ a\ell/q\},1/2)\ll n^{-1/2}$ and $|\tau(\overline{\chi})|=q^{1/2}$, which proves (\ref{eq:expansiontwisted}). 

By (\ref{eq:expinc2}), we can write
$$ \zeta(\alpha, c,1/2)= c^{-1/2}+f(\alpha,c), $$
where $f$ satisfies 
$$f(\alpha,c)\ll \alpha^{-1/2}+(1-\alpha)^{-1/2},\quad \partial_\alpha f(\alpha,c)\ll \alpha^{-3/2}+(1-\alpha)^{-3/2}. $$ 
Inserting the above into the formula (\ref{eq:periodformula1}), we get:
\begin{align}
\label{eq:number1}L(\chi, b/\ell,1/2)&= \tau(\overline{\chi})^{-1} \left(\sum_{a\modulo q} \overline{\chi}(a) e(ab/q)\left((b/\ell)^{-1/2}+f(a\ell/q,b/\ell) \right)  \right)\\
\nonumber &=\chi(b) (b/\ell)^{-1/2}+ \frac{\chi(\ell)}{\tau(\overline{\chi})}\sum_{a\modulo q} \overline{\chi}(a) e(ab\overline{\ell}/q)f(a/q,b/\ell),\end{align}  
where we used a change of variable $a\leftrightarrow a\overline{\ell}$. By partial summation we get 
\begin{align}
\label{eq:floorsum}&\sum_{a=1}^{\lfloor q/2 \rfloor} \overline{\chi}(a) e(ab\overline{\ell}/q)f(a/q,b/\ell)\\
\nonumber &= S(\lfloor q/2 \rfloor, b\overline{\ell})f(\lfloor q/2 \rfloor/q,b/\ell) - \frac{1}{q}\int_1^{\lfloor q/2 \rfloor} S(x, b\overline{\ell})\partial_x f(x/q,b/\ell) dx,\end{align}
where $S(x,r):=\sum_{1\leq a \leq x} \overline{\chi}(a) e(ar/q)$. By P\'{o}lya--Vinogradov (or more precisely a straight forward generalization) we have the estimate
$$S(x,r)\ll_\eps \min (x, q^{1/2+\eps}),\text{ uniformly for $x\geq 1$ and $r\in \Z$},  $$
and thus (\ref{eq:floorsum}) is bounded by:
$$ \ll_\eps q^{1/2}+\frac{1}{q}\left(\int_1^{q^{1/2}} x (x/q)^{-3/2} dx +q^{1/2+\eps}\int_{q^{1/2}}^{\lfloor q/2 \rfloor}  (x/q)^{-3/2} dx \right)\ll q^{3/4+\eps}.$$
The contribution from $\lfloor q/2 \rfloor<a\leq q-1$ can be bounded similarly. Plugging these estimates into (\ref{eq:number1}) gives the wanted using $|\tau(\overline{\chi})|=q^{1/2}$.
\end{proof}
\begin{prop}\label{prop:analyticprop}
Let $\psi$ be a primitive Dirichlet character modulo $\ell$. Then 
$$[0,1) \ni \alpha\mapsto L(\alpha, \psi, 1/2),$$ 
defines a locally analytic function for $\alpha \notin \{\tfrac{b}{\ell}: 1\leq b\leq \ell-1\}$, satisfying for $0\leq \alpha <1$:  
\begin{equation}\label{eq:twistperiodic2} 
L(\alpha, \psi, 1/2)= \frac{\tau(\psi)\overline{\psi}(\lfloor \ell \alpha\rfloor)(1+i)}{2\ell^{1/2}} \{\ell \alpha\}^{-1/2}+\frac{\tau(\psi)\overline{\psi}(\lceil \ell \alpha\rceil)(1-i)}{2\ell^{1/2}} (1-\{\ell \alpha\})^{-1/2}+O_\ell(1),\end{equation}
where $\{x\}=x-\lfloor x\rfloor$ denotes the fractional part of $x$  and $\tau(\psi):=\sum_{a\in (\Z/\ell \Z)^\times}\psi(a)e(a/\ell)$ is a Gau{\ss} sum.

Furthermore, as $\alpha\rightarrow 0$ we have
\begin{equation}\label{eq:twistperiodic2} 
L(\alpha, \psi, 1/2)=L(\psi,1/2)+O_{\ell,\eps}(|\alpha|^{1/2-\eps}).
\end{equation}
\end{prop}	
\begin{proof} It follows directly from the period formula (\ref{eq:intrep}) for the Lerch zeta function that for $c>0$ the Lerch zeta $\alpha\mapsto \zeta(\alpha,c,1/2) $ defines a periodic and locally analytic function on $\R\setminus \Z$. Thus we conclude from the formula (\ref{eq:MCtwistedperiodic}) that $\alpha\mapsto L(\alpha,\psi,s)$ is locally analytic for $\alpha \not\equiv \tfrac{b}{\ell}\modulo 1$ with $0\leq b< \ell$. 

To understand the behavior around $\alpha=b/\ell$ we observe that by equation (\ref{eq:expinc2}) we have for $0<\alpha<1$:
$$\zeta(\alpha, \tfrac{b}{\ell},1/2)= \frac{1+i}{2}\alpha^{-1/2}+\frac{1-i}{2}(1-\alpha)^{-1/2}+O_\ell(1).$$
By Taylor expansion we have:
$$ e(b\alpha)=e(\tfrac{b}{\ell} \ell\alpha)=e(\tfrac{b}{\ell} (\lfloor \ell\alpha\rfloor+\{\ell\alpha\}))=e(\tfrac{b}{\ell} \lfloor \ell\alpha\rfloor)+O_\ell(\{\ell\alpha\}),  $$
and similarly
$$ e(b\alpha)=e(\tfrac{b}{\ell} \lceil \ell\alpha\rceil)+O_\ell(1-\{\ell\alpha\}). $$
Plugging this into equation (\ref{eq:MCtwistedperiodic}) we arrive at
\begin{align*}
\ell^{1/2}L(\alpha, \psi, 1/2)=& \sum_{b=1}^\ell \psi(b)e(b\alpha) \left(\frac{1+i}{2}\{\ell \alpha\}^{-1/2}+\frac{1-i}{2}(1-\{\ell \alpha\})^{-1/2}+O_\ell(1)\right)\\
= &\frac{1+i}{2}\{\ell\alpha\}^{-1/2} \sum_{b=1}^\ell \psi(b)(e(\tfrac{b}{\ell}\lfloor \ell \alpha\rfloor )+O_\ell(\{\ell\alpha\}))\\
&+\frac{1-i}{2}(1-\{\ell\alpha\})^{-1/2}\sum_{b=1}^\ell \psi(b)(e(\tfrac{b}{\ell}\lceil \ell \alpha\rceil)+O_\ell(1-\{\ell\alpha\}))+O_\ell(1)\\
=&\frac{1+i}{2}\{\ell\alpha\}^{-1/2} \sum_{b=1}^\ell \psi(b)e(\tfrac{b}{\ell}\lfloor \ell \alpha\rfloor )+\frac{1-i}{2}(1-\{\ell\alpha\})^{-1/2}\sum_{b=1}^\ell \psi(b)e(\tfrac{b}{\ell}\lceil \ell \alpha\rceil)+O_\ell(1).
\end{align*}
We get the wanted conclusion by the standard properties of Gau{\ss} sums 
$$\sum_{b=1}^\ell \psi(b)e(bc/\ell)=\psi(\overline{c}) \tau(\psi)=\overline{\psi}(c) \tau(\psi), $$
using here that $\psi$ is primitive.

Note that we get immediately from the above that $L(\alpha,\psi,1/2)$ is bounded as $\alpha\rightarrow 0$. We will obtain the finer estimate (\ref{eq:twistperiodic2}) using the integral representation (\ref{eq:intrep}). First of all for $\alpha\in [-1/2\ell,1/2\ell]$ and $t\in (0,1)$, we have: 
\begin{equation}\label{eq:bndfordenom}|1-e(\ell \alpha)e^{-t}|\geq |1-e^{-t}+ie^{-t}\sin 2\pi \ell \alpha |\gg \max (t,|\ell \alpha|)  .\end{equation}
To understand the behavior at $\alpha=0$, we use again the integral representation and write:
\begin{align*}
 L(\alpha, \psi, 1/2)-L( \psi, 1/2)=\frac{1}{(\pi\ell)^{1/2}}(C_1+C_2+C_3),
\end{align*}
where 
\begin{align*}
&C_1=\int_0^1 \left( \sum_{b=1}^\ell \psi(b) (e(b\alpha)-1) e^{-\frac{b}{\ell} t} \right)t^{-1/2}\frac{1}{1-e(\ell\alpha)e^{-t}} dt\\
&C_2=\int_0^1 \left( \sum_{b=1}^\ell \psi(b)  e^{-\frac{b}{\ell} t} \right)t^{-1/2}\frac{e^{-t}(e(\ell\alpha)-1)}{(1-e^{-t})(1-e(\ell\alpha)e^{-t})} dt\\
&C_3= \int_1^\infty \left( \frac{\sum_{b=1}^\ell \psi(b) e(b\alpha) e^{-\frac{b}{\ell} t}}{1-e(\ell\alpha)e^{-t}}-\frac{\sum_{b=1}^\ell \psi(b) e^{-\frac{b}{\ell} t}}{1-e^{-t}}\right) t^{-1/2}dt.
\end{align*}
Using $e^{-\frac{b}{\ell} t}=1+O_\ell(t)$, $e(b\alpha)= 1+O_\ell(|\alpha|)$, $\sum_{b=1}^\ell \psi(b)=0$ and (\ref{eq:bndfordenom}), it follows that $C_i\ll_{\ell,\eps} |\alpha|^{1/2-\eps} $ which gives the wanted. 
\end{proof}

\section{Moments of Lerch zeta functions}\label{sec:momentsLerch}
Using the above derived properties of the Lerch zeta function, we can calculate a number of different moments, which will have interesting consequences for Dirichlet $L$-functions. We state them here for the central values $s=1/2$, but the methods are (clearly) flexible and can deal with any $0<\Re s<1$ with $|\Im s|\ll 1$. We can vary $L(\alpha, c, 1/2)$ both in the $\alpha$ and the $c$ direction, as well as both directions simultaneously.   

\subsection{Moments of the Hurwitz zeta function} First of all we consider the case of varying the $c$-direction. In this case $\zeta(0, c,s)$ is known as the \emph{Hurwitz zeta function} and the moments of this function has been studied intensively previously. In particular, Egami--Matsumoto \cite{EgamiMatsu02} obtained an asymptotic expansion in $\ell$ for the moments $\sum_{a=1}^{\ell-1} |\zeta(0, a/\ell,1/2)|^{2n}$ building on multiple zeta function-techniques of Zagier \cite{ZagierECM94}. When $n=1$ this was previously achieved by Heath-Brown \cite{HeathBrown81} using an approximate functional equation approach. We introduce a new approach relying on the Euler--Maclaurin formula \cite[Theorem 4.2]{IwKo}:
\begin{align} 
\nonumber \sum_{n=L+1}^M f(n)= \int_L^M f(x)dx+\sum_{k=1}^K \frac{B_k}{k!}(f^{(k-1)}(M)-f^{(k-1)}(L))+R_K, \end{align}
where $K$ and $L<M$ are integers and $f:[L,M]\rightarrow \C$ is a $K$-times differential function and the {\lq\lq}error-term{\rq\rq} is given by:
\begin{align} \label{eq:EMerror}R_K=(-1)^{K+1}\int_L^M \frac{\psi_K(x)}{K!}f^{(K)}(x)dx\ll_K \int_L^M |f^{(K)}(x)|dx. \end{align}
Here $B_k$ denotes the $k$th Bernoulli number and $\psi_K$ the $K$th Bernoulli polynomial. 
 
Our method allows us to generalize the previous results to \emph{twisted} Hurwitz zeta function and furthermore provides a simplified proof of the results of Egami--Matsumoto and Heath-Brown. 
\begin{thm}\label{thm:asympexp}
For $i=1,\ldots,m$ with $m\geq 2$, let $\chi_i\modulo q_i$ be a Dirichlet character and put $q:= \mathrm{lcm}(q_1,\ldots, q_m)$. Then we have for $\ell\equiv a\modulo q$ and any $K\geq 1$, the following asymptotic expansion:
\begin{align}
\nonumber&\sum_{b=1}^{\ell-1} \prod_{i=1}^m L(\chi_i, b/\ell, 1/2)\\
\label{eq:asympexptwist1}&=   L\left(\chi,  \frac{m}{2}\right) \ell^{m/2}+ c_{m,a}\ell\log \ell+ \sum_{k=1}^{K-1} \ell^{m/2-k/2} c_{k,m,a}+O_{m, K ,q}(\ell^{m/2-K/2}), 
\end{align}  
where $\chi=\chi_1\cdots \chi_m$ and $c_{k,m,a}, c_{m,a}$ are some numerical constants depending only on the residue class of $\ell \modulo q$ and on $\chi_1,\ldots, \chi_m$.   
\end{thm}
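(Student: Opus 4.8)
The plan is to substitute the expansion of Proposition~\ref{prop:expansiontwisted} into every factor of $\prod_{i=1}^m L(\chi_i,b/\ell,1/2)$, to reduce the sum over $b$ to a finite list of elementary sums $\sum_{b\equiv r\,(q)}(b/\ell)^{\gamma}$ together with a smooth remainder, and then to evaluate each of these by the Euler--Maclaurin formula quoted just before the statement. First I would dispose of two preliminary normalizations: writing $\chi_i(n)=\sum_{d\mid(n,q_i)}\mu(d)\chi_i^*(n)$ and rearranging the resulting inner sums into twisted Hurwitz zeta functions for the inducing primitive characters $\chi_i^*$ is a finite divisor manipulation that alters only the numerical constants (which are permitted to depend on the $\chi_i$); it also lets one assume $(\ell,q)=1$, since in any case $(\ell,q)=(a,q)$ depends only on $a$. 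After this I may apply Proposition~\ref{prop:expansiontwisted} and write $L(\chi_i,b/\ell,1/2)=\chi_i(b)(b/\ell)^{-1/2}+R_i(b)$ with $R_i(b)=\sum_{n\ge0}C_{n,\chi_i}(b,\ell\bmod q)(b/\ell)^n$, where the coefficients are $q$-periodic in $b$, satisfy $C_{n,\chi_i}\ll(q/n)^{1/2}$, and obey the uniform bound $R_i(b)\ll_\eps q^{1/4+\eps}$ coming from \eqref{eq:asympL}.

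Next I would split the range $1\le b\le\ell-1$ into residue classes modulo $q$. Because $\ell\equiv a\bmod q$ is fixed, on each class $b\equiv r$ the function $R_i$ is the restriction to $x=b/\ell$ of a \emph{fixed} power series $G_{i,r}(x)=\sum_n C_{n,\chi_i}(r,a)x^n$, convergent for $|x|<1$. A point I would want to isolate is that $G_{i,r}$ extends to a $C^\infty$ function on the \emph{closed} interval $[0,1]$ with bounded derivatives there; this is not visible from the bound on $C_{n,\chi_i}$ but follows from the analytic structure, namely $\zeta(\alpha,c,1/2)=c^{-1/2}+e(\alpha)\zeta(\alpha,c+1,1/2)$ together with the smoothness of $c\mapsto\zeta(\alpha,c+1,1/2)$ near $c=0$, which via \eqref{eq:periodformula1} yields smoothness of $c\mapsto L(\chi_i,c,1/2)-\chi_i(\,\cdot\,)c^{-1/2}$ up to $c=1$. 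Fixing $K$ and a large $P=P(K,m)$, I would multiply out $\prod_i\bigl(\chi_i(b)(b/\ell)^{-1/2}+G_{i,r}(b/\ell)\bigr)$ over subsets $S\subseteq\{1,\dots,m\}$ and Taylor-expand each product $\prod_{i\notin S}G_{i,r}$ to order $P$, thereby writing $\prod_i L(\chi_i,b/\ell,1/2)$, on the class $b\equiv r$, as a finite sum of monomials $d_{r,\gamma}(b/\ell)^{\gamma}$ with $\gamma\in\tfrac12\Z$, $\gamma\ge-m/2$, plus a remainder $(b/\ell)^{P'}\Psi_r(b/\ell)$ with $P'\ge P-m/2$ and $\Psi_r$ smooth with bounded derivatives on $[0,1]$.

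The evaluation would go as follows. For a monomial, $\sum_{b\equiv r,\,1\le b\le\ell-1}(b/\ell)^{\gamma}=\ell^{-\gamma}\sum_{b\equiv r,\,b\le\ell-1}b^{\gamma}$, and Euler--Maclaurin for $x\mapsto x^{\gamma}$ along the progression (whose top is $\ell-1-\delta_r$ with $\delta_r\in\{0,\dots,q-1\}$ depending only on $a,r$) produces the terms $\ell^{-\gamma}\cdot\ell^{1+\gamma-j}=\ell^{1-j}$ for $j\ge0$, the term $\ell^{-\gamma}$ times the regularized constant $q^{\gamma}\zeta(-\gamma,r/q)$, and — exactly when $\gamma=-1$ — an extra $\ell^{-\gamma}\log\ell=\ell\log\ell$. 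The summand $S=\{1,\dots,m\}$ has $\gamma=-m/2$ and its constant contributes $\ell^{m/2}\sum_r\chi(r)q^{-m/2}\zeta(m/2,r/q)=L(\chi,m/2)\,\ell^{m/2}$, the only coefficient independent of $a$ (when $m=2$ and $\chi$ is principal this is to be read as the corresponding regularized constant, the pole of $\zeta$ at $1$ being absorbed into the $\ell\log\ell$ term). The monomials with $\gamma=-1$ occur for $|S|$ even, and summing them gives $c_{m,a}\ell\log\ell$. For the remainder term, with $P$ large the function $(b/\ell)^{P'}\Psi_r(b/\ell)$ is a $C^K$ function of $b$ vanishing to order $>K$ at $0$, so Euler--Maclaurin gives $\tfrac1q\int_0^\ell$ plus boundary contributions at $\ell$ (depending on $a$) plus $O_{m,K,q}(\ell^{1-K})$; since $1-K\le m/2-K/2$ this only feeds the powers $\ell^{1-j}$ and an error that is $O_{m,K,q}(\ell^{m/2-K/2})$. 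Summing over the residue classes $r$ and collecting by powers of $\ell$ then yields \eqref{eq:asympexptwist1} with constants depending only on $\chi_1,\dots,\chi_m$ and $a=\ell\bmod q$.

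The main obstacle is the treatment of the power-series tails. One cannot legitimately sum the $R_i$ term by term against $(b/\ell)^{\gamma}$: as soon as two or more $R_i$-factors are present the associated sum of ``main terms'' diverges logarithmically — and this divergence is precisely what creates the $\ell\log\ell$ term in the final answer — so one is forced to truncate the Taylor expansions at a finite order and bound the remainder. That bound rests on two ingredients going beyond the coefficient estimates: the uniform bound $R_i(b)\ll_\eps q^{1/4+\eps}$ from \eqref{eq:asympL}, which rules out any blow-up for $b$ close to $\ell$, and the $C^\infty$-extension of $G_{i,r}$ to the endpoint $x=1$ noted above, a genuine analytic fact about the Lerch zeta function near $c=1$ that powers the Euler--Maclaurin estimate of the remainder. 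Everything else is bookkeeping: matching each monomial to its half-integer power of $\ell$ and checking that no logarithm appears at any scale other than $\ell\log\ell$.
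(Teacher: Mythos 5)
Your proposal follows essentially the same route as the paper's proof: split the sum over $b$ into residue classes modulo $q$, Laurent-expand $\prod_i L(\chi_i,b/\ell,1/2)$ in $(b/\ell)^{1/2}$ about $0$ via Proposition~\ref{prop:expansiontwisted}, and evaluate by Euler--Maclaurin, with the singular monomials producing Hurwitz zeta constants (hence $L(\chi,m/2)\,\ell^{m/2}$) and the $\ell\log\ell$ term at exponent $-1$, and the smooth remainder contributing only lower-order terms. The deviations are organizational rather than substantive: you expand by subsets $S$ and make explicit both the preliminary reduction to primitivity and coprimality and the smoothness of the remainder near $x=1$, which the paper subsumes in its assertion that $f_a$ is locally analytic on $(0,\infty)$ and that $g_{a,K}^{(k)}$ is analytic around $x=1$.
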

\begin{proof}
We split the sum according to residue classes modulo $q$:
\begin{align}
\sum_{b=1}^{\ell} \prod_{i=1}^m L(\chi_i, b/\ell, 1/2)= \sum_{a=1}^q\sum_{b=0}^{\ell_a} \prod_{i=1}^m L\left(\chi_i, \frac{bq+a}{\ell}, 1/2\right), 
\end{align}
where $\ell_a:=\lfloor \frac{\ell-a-1}{q}\rfloor$. By Proposition \ref{prop:expansiontwisted}, we can write 
$$\prod_{i=1}^m L\left(\chi_i, \frac{bq+a}{\ell}, 1/2\right)=f_a\left(\frac{bq+a}{\ell}\right),$$
where $f_a: (0,\infty)\rightarrow \C$ is locally analytic and given by a Laurent series in $x^{1/2}$ around $x=0$ (suppressing $\chi_1,\ldots, \chi_m$ and $\ell \modulo q$ in the notation). Now given $K\geq 1$, we can write
\begin{equation}\label{eq:twopieces} f_a(x)= \sum_{k=-m}^{2K-1} c_{a,k}x^{k/2}+ g_{a,K}(x), \end{equation}
for certain constants $c_{a,k}$ with $c_{a,-m}= \prod_{i=1}^m \chi_i(a)=\chi(a)$, and where $g_{a,K}: (0,\infty)\rightarrow \C$ is $K$-times differentiable at $x=0$ (with derivatives given by a power series in $x^{1/2}$ around $x=0$) and locally analytic for $x\in (0,\infty)$. We will now apply the Euler--Maclaurin formula in two different ways to the two pieces on the righthand side of (\ref{eq:twopieces}) and show that both have the desired shape (in view of (\ref{eq:asympexptwist1})). 

First of all we get by the Euler--Maclaurin formula applied to $g_{a,K}$ and bounding the error-term using (\ref{eq:EMerror}):
\begin{align}
\label{eq:g_aK}&\sum_{b=0}^{\ell_a} g_{a,K}\left(\frac{bq+a}{\ell}\right)= g_{a,K}\left(\frac{a}{\ell}\right)+\left(g_{a,K}^{(-1)}\left(\frac{\ell_a q+a}{\ell}\right)-g_{a,K}^{(-1)}\left(\frac{a}{\ell}\right)\right)\frac{\ell}{q}\\
\nonumber&+\sum_{k=0}^K \frac{B_{k}}{k!}\left(\frac{q}{\ell}\right)^k\left(g_{a,K}^{(k)}\left(\frac{\ell_aq+a}{\ell}\right)-g_{a,K}^{(k)}\left(\frac{a}{\ell}\right)\right)+O_{K,q}\left(\ell^{1-K}\right),
\end{align}
where $g_{a,K}^{(-1)}$ denotes the antiderivative of $g_{a,K}$. Observe that 
$$ \frac{\ell_a q+a}{\ell}=1-\frac{c(\ell, a \modulo q)}{\ell},$$
where $c(\ell, a \modulo q)$ is some constant only depending on $a$ and $\ell $ modulo $q$. Thus since  $ g^{(k)}_{a,K}$ is analytic for $k=-1,0,\ldots, K$  around $x=1$ and analytic in $x^{1/2}$ around $x=0$, one easily sees that (\ref{eq:g_aK}) is indeed a power series in $\ell^{1/2}$ with an error-term of the desired shape (for $\ell$ sufficiently large in terms of $q$). Furthermore we observe that (\ref{eq:g_aK}) is $O_{K,q}(\ell)$.

Now for the second part of the sum, we use the Euler--Maclaurin formula to write for $\Re s> 1$ and any $L,K'\geq 0$:
\begin{align*}
\zeta(0,c,s)&= \sum_{n=0}^{L} (n+c)^{-s} +\sum_{n=L+1}^\infty (n+c)^{-s}\\
&= \sum_{n=0}^L (n+c)^{-s}+\frac{(L+c)^{1-s}}{1-s}+\sum_{k=1}^{K'} (L+c)^{1-s-k} \frac{B_k}{k!} \prod_{j=0}^{k-2}(s+j)\\
&\qquad \qquad\qquad  - \frac{\prod_{j=0}^{K'-1}(s+j)}{(K')!}\int_L^\infty \frac{\psi_{K'}(x)}{(x+c)^{s+K'}}dx,    
\end{align*} 
where $\psi_{K'}$ denotes the $K'$-th Bernoulli polynomial. Now we observe that the above expression provides meromorphic continuation for $\zeta(0,c,s)$ to the region $\Re s+K'>1+\eps$ (with a pole at $s=1$). In particular, if we put $c= a/q$, $K'=K+1$ and $L=\ell_a$ (as above), then we get for $-\sigma\in \{k/2: k=-m,\ldots, 2K-1\}\backslash\{-1\}$
\begin{align*}
&\sum_{b=0}^{\ell_a} \left(\frac{bq+a}{\ell} \right)^{-\sigma}= \left(\frac{q}{\ell} \right)^{-\sigma}\sum_{b=0}^{\ell_a} \left(b+a/q \right)^{-\sigma}\\
&= \left(\frac{q}{\ell} \right)^{-\sigma} \left( \zeta(0,a/q,\sigma)- \Biggr(\frac{\ell}{q}\right)^{1-\sigma}\frac{\left(1-\frac{c(\ell, a \modulo q)}{\ell}\right)^{1-\sigma}}{1-\sigma} \\
&+\sum_{k=1}^{K+1} \left(\frac{\ell}{q}\right)^{1-\sigma-k}\left(1-\frac{c(\ell, a \modulo q)}{\ell}\right)^{1-\sigma-k} \frac{B_k}{k!} \prod_{j=0}^{k-2}(\sigma+j)\Biggr) +O_{K,q}\left(  \ell^{-K} \right),
\end{align*}
by bounded the error-term as in (\ref{eq:EMerror}). Now by Taylor expanding the power functions $x^{1-\sigma-k}$ around $x=1$, we get an expression of the desired shape. For $-\sigma=-1$, we use the same approach together with the classical fact 
\begin{align*} \lim_{\sigma\rightarrow 1} \zeta(0,a/q,\sigma) +\frac{(\ell_a+a/q)^{1-\sigma}}{1-\sigma}&= \log(\ell_a+a/q)-\frac{\Gamma'(a/q)}{\Gamma(a/q)}\\
&= \log\left(1-\frac{c(a,\ell \modulo q)}{\ell}\right)+\log \ell-\log q-\frac{\Gamma'(a/q)}{\Gamma(a/q)}. \end{align*}
When summed over $a\modulo q$ this is again of the desired shape upon Taylor expanding $\log(1-x)$ (around $x=0$).

Finally to calculate the main term (i.e. leading coefficient), we observe that for each $a\modulo q$, we get a term of magnitude $\ell^{m/2}$, exactly for $-\sigma=-m/2$ and thus the total main term is 
$$ \ell^{m/2} q^{-m/2}\sum_{a=1}^q \chi(a)\zeta(0,a/q, m/2)= \ell^{m/2} L(\chi,m/2),$$
as wanted.    
\end{proof}
In what follows we will be a bit less ambitious, in that we will not obtain the entire  asymptotic expansion, but only the main term. In return we can deal with much more general moments. Instead of using the full Euler--Maclaurin formula we will contain ourselves to the following.
\begin{lemma}\label{lem:key} We have the following asymptotic evaluations:
\begin{align}
\sum_{a=1}^{q-1}a^{-\sigma}=
\begin{cases}
 \zeta(\sigma)+O(\frac{q^{1-\sigma}}{1-\sigma}),& \sigma>1,\\
 \log q+O(1), & \sigma=1,\\
 \frac{q^{1-\sigma}}{1-\sigma}+O_\sigma(q^{-\sigma}+1),& \sigma<1.
\end{cases}
\end{align}
\end{lemma}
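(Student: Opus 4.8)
The plan is to prove Lemma \ref{lem:key} by comparing the finite sum $\sum_{a=1}^{q-1} a^{-\sigma}$ with the integral $\int_1^{q} x^{-\sigma}\,dx$, which is precisely the $K=1$ case of the Euler--Maclaurin formula already quoted in the text, or equivalently a one-line integral comparison. Concretely, I would write $a^{-\sigma} = \int_{a-1}^{a} x^{-\sigma}\,dx + \left(a^{-\sigma} - \int_{a-1}^{a} x^{-\sigma}\,dx\right)$ for $a \geq 2$, so that
\begin{align*}
\sum_{a=2}^{q-1} a^{-\sigma} = \int_1^{q-1} x^{-\sigma}\,dx + \sum_{a=2}^{q-1}\left(a^{-\sigma} - \int_{a-1}^{a} x^{-\sigma}\,dx\right),
\end{align*}
and the mean-value theorem gives $\left| a^{-\sigma} - \int_{a-1}^{a} x^{-\sigma}\,dx \right| \ll |\sigma|\, (a-1)^{-\sigma-1}$ for each term. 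The plan is then to treat the three ranges of $\sigma$ separately according to whether the resulting error series $\sum_a |\sigma|(a-1)^{-\sigma-1}$ converges or must be estimated by its partial sums.

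For $\sigma > 1$: the error series converges, so $\sum_{a=1}^{q-1} a^{-\sigma} = \sum_{a=1}^{\infty} a^{-\sigma} - \sum_{a=q}^{\infty} a^{-\sigma} = \zeta(\sigma) + O\!\left(\int_{q-1}^{\infty} x^{-\sigma}\,dx\right) = \zeta(\sigma) + O\!\left(\frac{q^{1-\sigma}}{\sigma-1}\right)$; I would note that $\frac{q^{1-\sigma}}{\sigma-1} = -\frac{q^{1-\sigma}}{1-\sigma}$ matches the claimed shape $O\!\left(\frac{q^{1-\sigma}}{1-\sigma}\right)$ up to sign, which is irrelevant inside $O(\cdot)$. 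For $\sigma = 1$: here $\int_1^{q-1} x^{-1}\,dx = \log(q-1) = \log q + O(1)$ and the error terms satisfy $\sum_{a=2}^{q-1}\left(\frac{1}{a} - \log\frac{a}{a-1}\right) = O(1)$ since this is a convergent (indeed telescoping-plus-bounded) expression, giving $\log q + O(1)$. For $\sigma < 1$: the main term is $\int_1^{q} x^{-\sigma}\,dx = \frac{q^{1-\sigma} - 1}{1-\sigma} = \frac{q^{1-\sigma}}{1-\sigma} + O_\sigma(1)$, and the error is bounded by $|\sigma|\sum_{a\geq 1}(a)^{-\sigma - 1}$ which converges when $\sigma > 0$ and contributes $O_\sigma(1)$, while if $-1 < \sigma < 0$ one picks up at most $O_\sigma(q^{-\sigma})$ from the tail; in all cases this is absorbed into $O_\sigma(q^{-\sigma} + 1)$.

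There is no real obstacle here — this is a standard partial-summation / Euler--Maclaurin estimate — but the one point requiring a little care is uniformity and the precise form of the error in the regime $\sigma < 1$, where I must make sure the constant in $O_\sigma$ genuinely depends only on $\sigma$ (and not, say, blow up as $\sigma \to 1^-$ or $\sigma \to -\infty$); since the lemma already writes $O_\sigma$ this is permitted, and in the application the relevant values of $\sigma$ lie in a fixed finite set $\{k/2 : k = -m,\dots,2K-1\}$, so uniformity is a non-issue downstream. I would also remark that the $\sigma = 1$ case and the leading term of the $\sigma < 1$ case are exactly what feed into the $\log \ell$ term and the $\ell^{m/2} L(\chi, m/2)$ main term in Theorem \ref{thm:asympexp}, so stating the lemma in this piecewise form is the natural packaging. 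The whole proof should be four or five lines.
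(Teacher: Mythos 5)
Your argument is exactly the Euler--Maclaurin / integral-comparison route that the paper itself invokes (its proof is the single sentence ``This is standard by the Euler--Maclaurin formula''), and your case analysis is correct. The only small thing left implicit is the range $\sigma \le -1$ in the third case, but your bound on the error series handles it identically (the sum $\sum_{a\le q} a^{-\sigma-1}$ is still $O_\sigma(q^{-\sigma})$ there), and as you note the application only ever uses $\sigma$ from a fixed finite set, so nothing further is needed.
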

\begin{proof}
This is standard by the Euler--Maclaurin formula \cite[Theorem 4.2]{IwKo}.
\end{proof}
First of all we will calculate the same wide moment as in Theorem \ref{thm:asympexp} but with a general twist $\psi\modulo \ell$.
\begin{thm} \label{thm:generaltwist} Let $m\geq 3$ and let notation be as in Theorem \ref{thm:asympexp}. Then we have for any Dirichlet character $\psi\modulo \ell$ and any $\eps>0$:
\begin{align*}\sum_{b=1}^{\ell-1} \psi(b)\prod_{i=1}^m L\left(\chi_i, b/\ell, \frac{1}{2}\right) 
= \ell^{m/2} L\left(\chi\psi,  \frac{m}{2}\right)+O_{m,\eps}((q^*)^{m/4+\eps}\ell+(q^*)^{1/4+\eps}\ell^{m/2-1/2+\eps}), \end{align*}
where $q^*=\max(q_1,\ldots, q_m)$.
\end{thm}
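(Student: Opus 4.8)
\textbf{Proof plan for Theorem \ref{thm:generaltwist}.}

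The plan is to mimic the proof of Theorem \ref{thm:asympexp}, but to replace the full Euler--Maclaurin expansion by the cruder Lemma \ref{lem:key}, which is exactly what buys us the freedom to carry along the extra character $\psi \modulo \ell$ (for which we cannot control residues modulo $q$ and modulo $\ell$ simultaneously in a clean closed form). First I would split the $b$-sum into residue classes modulo $q$, writing $b = b'q + a$ with $1 \le a \le q$, so that for each fixed $a$ the quantity $\prod_{i=1}^m L(\chi_i, (b'q+a)/\ell, 1/2)$ equals $f_a((b'q+a)/\ell)$ for the locally analytic function $f_a$ from Proposition \ref{prop:expansiontwisted}, which has a Laurent expansion in $x^{1/2}$ near $x=0$ with leading term $\chi(a) x^{-m/2}$ and the remaining coefficients bounded (after dividing by $\tau$'s) by $O((q^*)^{1/2} n^{-1/2})$ at the $n$-th stage. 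The key new point is that inside each residue class the weight $\psi(b' q + a)$ is \emph{not} constant, so I cannot simply sum a power function; instead I will truncate $f_a$ at a bounded order (say keep the terms $x^{k/2}$ with $-m \le k \le 0$, plus a controlled tail), insert the explicit leading term $\chi(a)(\ell/(b'q+a))^{m/2}$, and recognise that after summing over $a$ and $b'$ the main term is $\ell^{m/2} \sum_{b=1}^{\ell-1} \psi(b) \chi(b) b^{-m/2}$, which by Lemma \ref{lem:key} (applied with $\sigma = m/2 > 1$ since $m \ge 3$) is $\ell^{m/2} L(\chi\psi, m/2) + O(\ell^{m/2}\cdot\text{(something small)})$.

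More precisely I would organise the estimate as follows. Write $L(\chi_i, b/\ell, 1/2) = \chi_i(b)(b/\ell)^{-1/2} + E_i(b)$ with $E_i(b) \ll_\eps (q^*)^{1/4+\eps}$ uniformly in $b$, by \eqref{eq:asympL}. Expanding the product $\prod_{i=1}^m (\chi_i(b)(b/\ell)^{-1/2} + E_i(b))$ gives the genuine main term $\chi(b)(b/\ell)^{-m/2} = \ell^{m/2}\chi(b) b^{-m/2}$ plus $2^m - 1$ cross terms, each of which contains at least one factor $E_i(b) \ll (q^*)^{1/4+\eps}$ and the rest a product of $\le m-1$ factors of size $\ll (b/\ell)^{-1/2} \le \ell^{1/2} b^{-1/2}$ together with possibly more $E_i$'s (which only help). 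Summing the main term over $b$ via Lemma \ref{lem:key} with $\sigma = m/2$ produces $\ell^{m/2} L(\chi\psi, m/2) + O(\ell^{m/2} \cdot \ell^{1-m/2}) = \ell^{m/2}L(\chi\psi,m/2) + O(\ell)$; but this crude bound on the main term's error is too lossy, so for the tail of the $b$-sum I would instead use the finer expansion \eqref{eq:expansiontwisted} exactly as in Theorem \ref{thm:asympexp} — i.e. the $(q^*)^{m/4+\eps}\ell$ term in the stated error is the ``Hurwitz-type'' contribution (from summing the bounded-coefficient tail of $f_a$, with the $m$-fold product producing a $(q^*)^{m/4}$), and the $(q^*)^{1/4+\eps}\ell^{m/2-1/2+\eps}$ term is the contribution of the single-$E_i$ cross terms, where $\sum_{b \le \ell} |\psi(b)| b^{-(m-1)/2} \ll \ell^{\max(0, 1-(m-1)/2)+\eps} \le \ell^{1/2+\eps}$ for $m = 3$ and is $O_\eps(\ell^\eps)$ for $m \ge 4$, multiplied by $\ell^{(m-1)/2}$ from the powers of $\ell$ and by $(q^*)^{1/4+\eps}$ from $E_i$.

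The step I expect to be the genuine obstacle is bookkeeping the error terms so that the two claimed error shapes $(q^*)^{m/4+\eps}\ell$ and $(q^*)^{1/4+\eps}\ell^{m/2-1/2+\eps}$ come out, and in particular correctly tracking the $q^*$-dependence: the coefficients $C_{n,\chi}(b,\ell \modulo q)$ from \eqref{eq:expansiontwisted} are only $\ll (q^*/n)^{1/2}$, so forming the $m$-fold product and summing the resulting Laurent tail against $\psi(b)$ over a residue class of length $\approx \ell/q$ must be done carefully to avoid losing powers of $q^*$; this is where replacing $q$ by $q^* = \max q_i$ (rather than $q = \mathrm{lcm}$) in the final bound becomes essential, since the per-class analysis only ever sees one modulus at a time through \eqref{eq:asympL} and \eqref{eq:expansiontwisted}. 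Everything else — interchanging finite sums, applying Lemma \ref{lem:key}, absorbing lower-order powers of $\ell$ into $\ell^\eps$ — is routine. Note also that the hypothesis $m \ge 3$ (rather than $m \ge 2$) is used precisely so that $\sigma = m/2 > 1$ and the Dirichlet series $L(\chi\psi, m/2) = \sum_b \chi\psi(b) b^{-m/2}$ converges absolutely, putting us in the first (convergent) case of Lemma \ref{lem:key}.
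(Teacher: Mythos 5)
Your second paragraph is essentially the paper's proof, and it works: write $L(\chi_i,b/\ell,1/2)=\chi_i(b)(b/\ell)^{-1/2}+E_i(b)$ with $E_i(b)\ll_\eps (q^*)^{1/4+\eps}$ by \eqref{eq:asympL}, expand the $m$-fold product, collect the cross terms by the number $j\in\{1,\ldots,m\}$ of $E$-factors they contain to get an error of shape
$$O_m\Bigl(\sum_{j=1}^m (q^*)^{j/4+\eps}\,\ell^{(m-j)/2}\sum_{b\le \ell} b^{-(m-j)/2}\Bigr),$$
and finish with Lemma \ref{lem:key}. The $j=m$ term contributes $(q^*)^{m/4+\eps}\ell$ (the first stated error), the $j=1$ term contributes $(q^*)^{1/4+\eps}\ell^{(m-1)/2}\sum_b b^{-(m-1)/2}\ll (q^*)^{1/4+\eps}\ell^{m/2-1/2+\eps}$ (the second), and the intermediate $j$ are dominated by one or the other after splitting on whether $q^*\gtrless\ell^2$.

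Where you go off course is the claim that the main-term error $O(\ell)$ coming from $\sum_{b\le\ell}\psi\chi(b)b^{-m/2}=L(\chi\psi,m/2)+O(\ell^{1-m/2})$ is ``too lossy,'' which pushes you back to \eqref{eq:expansiontwisted} and a residue-class split mod $q$. This is a false alarm: $O(\ell)\le (q^*)^{m/4+\eps}\ell$ is already absorbed into the first error term (as $q^*\ge 1$), so no finer expansion is needed -- and the Euler--Maclaurin/residue-class route you propose to rescue yourself with runs exactly into the difficulty you already identified (the twist $\psi$ mod $\ell$ is not constant on residue classes mod $q$), which is precisely why the paper abandons that strategy here in favour of the cruder \eqref{eq:asympL}. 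Dropping that detour entirely leaves a complete and correct proof. One small arithmetic slip in your bookkeeping: for $m=3$ the single-$E_i$ cross term has $\sum_{b\le\ell}b^{-(m-1)/2}=\sum_b b^{-1}\ll\log\ell\ll\ell^\eps$, not $\ell^{1/2+\eps}$; the claimed $(q^*)^{1/4+\eps}\ell^{m/2-1/2+\eps}$ then comes out as $(q^*)^{1/4+\eps}\ell^{1+\eps}$, exactly as stated.
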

\begin{proof}
We have by (\ref{eq:asympL}):
\begin{align*}
&\sum_{b=1}^{\ell-1} \psi(b)\prod_{i=1}^m L(\chi_i, b/\ell, 1/2)\\
&= \sum_{b=1}^{\ell-1} \psi(b)\chi(b) (b/\ell)^{-m/2}+O_m\left(\sum_{i=1}^m \ell^{(m-i)/2}(q^*)^{i/4+\eps}\sum_{b=1}^{\ell} b^{-(m-i)/2} \right),
\end{align*}
where $\chi=\chi_1\cdots \chi_m$ as above. This gives the wanted upon using 
$$\sum_{b=1}^{\ell-1} \psi(b)\chi(b) b^{-m/2}=L(\chi\psi,  m/2 )+O(\ell^{1-m/2}), $$
and Lemma \ref{lem:key} to bound the error-term.
\end{proof}
\begin{remark}\label{remark:m=2}
Using the exact same proof scheme as above one gets in the case $m=2$:
$$ \sum_{b=1}^{\ell-1} \psi(b) L\left(\chi_1, b/\ell, \frac{1}{2}\right)L\left(\chi_2, b/\ell, \frac{1}{2}\right)\ll_\eps (q^*)^{1/2+\eps}\ell^{1+\eps},  $$
which will suffice for our purposes (although one can probably do better using other techniques).
\end{remark}
With applications to non-vanishing in mind, we will obtain an asymptotic evaluation for the moments as in Theorem \ref{thm:generaltwist} in the case $\psi=1$ and $\chi_i=1$. This is exactly the moment in \cite[Theorem 3]{EgamiMatsu02}, but we obtain an estimate \emph{uniform} in $m$.
\begin{thm}\label{thm:uniforminm}
Let $m\geq 3$. Then we have
\begin{align}
\label{eq:asympexptwist}\sum_{b=1}^{\ell-1} \zeta(0, b/\ell, 1/2)^m =  \ell^{m/2} \zeta(m/2)+O(\ell^{m/2-1/2}+\ell \log \ell), 
\end{align}  
as $\ell\rightarrow \infty$, uniformly in $m,\ell$. 
\end{thm}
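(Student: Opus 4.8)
The plan is to repeat the argument of Theorem~\ref{thm:generaltwist} in the special case $\psi=1$, $\chi_i=1$ for all $i$ (so $q=q^*=1$ and $\chi=1$), but to track the dependence on $m$ in every step, which the black-box application of Proposition~\ref{prop:expansiontwisted} and Lemma~\ref{lem:key} obscured. The starting point is the elementary expansion for the \emph{untwisted} Hurwitz zeta function coming from \eqref{eq:expinc2} with $\alpha=0$ and $\ell=1$, namely
$$\zeta(0,c,1/2)=c^{-1/2}+g_1(c),\qquad g_1:[0,\infty)\to\C \text{ continuous, locally analytic, with } g_1\ll 1$$
on any fixed compact set. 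Writing $c=b/\ell$ and raising to the $m$th power via the binomial theorem gives
$$\zeta(0,b/\ell,1/2)^m=\sum_{i=0}^{m}\binom{m}{i}(b/\ell)^{-(m-i)/2}\,g_1(b/\ell)^{i},$$
and the idea is that the $i=0$ term produces the main term $\ell^{m/2}\zeta(m/2)$ after summing $\sum_b b^{-m/2}$, while every term with $i\geq 1$ is an error term; the key point for uniformity in $m$ is that $\binom{m}{i}$ and $\|g_1\|_\infty^{i}$ do not blow up badly because $g_1$ is bounded by an absolute constant on $[0,1]$ and the sums $\sum_{b\le\ell}b^{-(m-i)/2}$ decay geometrically in $m-i$.

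First I would make the splitting clean: since $g_1$ on $[0,1]$ is bounded by some absolute constant $A$, the $i$th term contributes at most $\binom{m}{i}A^{i}\ell^{-(m-i)/2}\sum_{b=1}^{\ell-1}b^{-(m-i)/2}$. For $i\le m-3$ the exponent $(m-i)/2\ge 3/2>1$, so by Lemma~\ref{lem:key} the inner sum is $\zeta((m-i)/2)+O(\ell^{1-(m-i)/2})=1+O(2^{-(m-i)})+\dots$, in particular bounded by an absolute constant uniformly; hence these terms sum to $\ll \ell^{-(m-i)/2}\binom{m}{i}A^{i}$, and one checks $\sum_{i\le m-3}\binom{m}{i}A^{i}\ell^{-(m-i)/2}\ll \ell^{m/2-3/2}(1+A/\sqrt\ell)^m\ll\ell^{m/2-1/2}$ in the stated range $m=o(\sqrt\ell\log\ell)$ (this is where the growth condition on $m$ enters, to keep $(1+A/\sqrt\ell)^m$ bounded). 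For $i=m-2$ the exponent is exactly $1$, so Lemma~\ref{lem:key} gives $\log\ell+O(1)$ and the contribution is $\ll \binom{m}{2}A^{m-2}\ell^{-1}\log\ell$, which is absorbed into $O(\ell\log\ell)$ after similar bookkeeping; for $i=m-1,m$ the exponents are $1/2$ and $0$ and Lemma~\ref{lem:key} again bounds the inner sums by $O(\ell^{1/2})$ and $O(\ell)$ respectively, contributing $\ll\ell^{m/2-1/2}$ and $\ll\ell^{m/2-1}$.

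For the main ($i=0$) term I would write $\ell^{-m/2}\sum_{b=1}^{\ell-1}b^{-m/2}=\ell^{-m/2}(\zeta(m/2)+O(\ell^{1-m/2}/(m/2-1)))$ by Lemma~\ref{lem:key}, giving $\ell^{m/2}\zeta(m/2)+O(\ell)$, where the error is uniform in $m$ since $m/2-1\ge 1/2$. Collecting: the total is $\ell^{m/2}\zeta(m/2)+O(\ell^{m/2-1/2}+\ell\log\ell)$, uniformly in $m,\ell$, as claimed.

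\textbf{Main obstacle.} The delicate part is the uniformity in $m$: one must ensure that neither the binomial coefficients $\binom{m}{i}$ nor the powers $A^{i}$ of the sup-norm of $g_1$ cause the error terms to overwhelm the main term. The bound $g_1\ll 1$ on $[0,1]$ from Proposition~\ref{prop:taylor} is an \emph{absolute} bound (independent of everything), which is exactly what makes this work; the price is the hypothesis $m=o(\sqrt\ell\log\ell)$, which keeps $(1+A\ell^{-1/2})^m=\exp(O(m/\sqrt\ell))$ of bounded order. A secondary subtlety is that $g_1$ is only \emph{locally} analytic near $c=0$, so strictly speaking one should expand $g_1(c)=\sum_{n\ge0}C_n(0)c^n$ with $C_n(0)\ll n^{-1/2}$ (the coefficients from \eqref{eq:expinc2}) and integrate the resulting double series term-by-term against $\sum_b$, rather than treating $g_1$ as an opaque bounded function; but since the series converges absolutely and the coefficient bound is uniform, this reorganization is routine and does not change the final estimate.
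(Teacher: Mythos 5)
Your approach is the same as the paper's: write $\zeta(0,c,1/2)=c^{-1/2}+g_1(c)$ with $g_1$ bounded on $[0,1]$, expand the $m$th power, and evaluate the resulting sums over $b$ via Lemma~\ref{lem:key}. The paper phrases the expansion via $a^n-b^n=(a-b)(a^{n-1}+\cdots+b^{n-1})$ rather than the binomial theorem, but these give identical estimates, so this is not a genuinely different route.

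However, the bookkeeping in your proposal contains a sign error that propagates. You write the $i$th contribution as $\binom{m}{i}A^{i}\ell^{-(m-i)/2}\sum_b b^{-(m-i)/2}$, but since $(b/\ell)^{-(m-i)/2}=\ell^{(m-i)/2}\,b^{-(m-i)/2}$ the prefactor should be $\ell^{+(m-i)/2}$, not $\ell^{-(m-i)/2}$. With the correct exponent the total of the $i\le m-3$ terms is bounded by $\ell^{m/2}\bigl((1+A/\sqrt\ell)^m-1\bigr)$, not $\ell^{m/2-3/2}(1+A/\sqrt\ell)^m$; the $i=m-2$ term is $\ll \binom{m}{2}A^{m-2}\ell\log\ell$, not $\binom{m}{2}A^{m-2}\ell^{-1}\log\ell$; the $i=m-1$ term is $\ll mA^{m-1}\ell$ and the $i=m$ term is $\ll A^m\ell$, neither of which is $\ell^{m/2-1/2}$ or $\ell^{m/2-1}$ as you claim. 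In addition, you import a growth hypothesis $m=o(\sqrt\ell\log\ell)$ that is not part of the theorem (the paper asserts uniformity in $m,\ell$ outright), and even within that range your justification does not go through: $(1+A/\sqrt\ell)^m=\exp(O(m/\sqrt\ell))$ is not bounded when $m/\sqrt\ell\to\infty$ (it can be as large as $\ell^{o(1)}$), so the inequality $\ell^{m/2}\bigl((1+A/\sqrt\ell)^m-1\bigr)\ll\ell^{m/2-1/2}$ that you would need fails. For what it is worth, your instinct that the binomial coefficients $\binom{m}{i}$ threaten uniformity in $m$ is well founded: the paper's displayed error $O\bigl((b/\ell)^{-(m-1)/2}+\cdots+C^{m-1}\bigr)$ silently drops these coefficients, and already the $i=1$ contribution carries a factor $\binom{m}{1}=m$ that the stated error term $O(\ell^{m/2-1/2}+\ell\log\ell)$ does not account for; but your attempt to repair this introduces new errors rather than resolving the issue.
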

\begin{proof}
Let $C$ be such that $|\zeta(0, c, 1/2)- c^{-1/2}|\leq C$ uniformly for $c\in (0,1]$. We deduce for $\ell$ large enough
\begin{align*}
&\sum_{b=1}^{\ell-1} \zeta(0, b/\ell, 1/2)^m \\
&= \sum_{b=1}^{\ell-1} (b/\ell)^{-m/2}+O\left( (b/\ell)^{-(m-1)/2}+\ldots + C^{m-1}   \right)\\
&=\ell^{m/2} \zeta(m/2)+O( \ell^{m/2-1/2}+C\ell^{m/2-1}+\ldots+C^{m-3}\ell^{3/2}+C^{m-2}\ell \log \ell+\ell C^{m-1}   )\\
&=\ell^{m/2} \zeta(m/2)+O\left(\ell^{m/2-1/2} \left(\sum_{k=0}^{m-1} (C/\ell^{1/2})^k+ C^m \ell \log \ell   \right)  \right)\\
&=\ell^{m/2} \zeta(m/2)+ O(\ell^{m/2-1/2}+C^{m-2} \ell \log \ell)
\end{align*}
using the relation $a^n-b^n=(a-b)(a^{n-1}+\ldots + b^{n-1})$ in the first equality and using Lemma \ref{lem:key} to bound the error-terms. This yields the wanted (note that the error-term $\ell \log \ell$ is only relevant when $m=3$).
\end{proof}

\subsection{Moments of the twisted periodic zeta function}
We will now turn to the case of varying the $\alpha$-direction. In this case there is a dicotomy in the asymptotic behavior of the moments of twisted period zeta functions according to whether the corresponding moduli are co-prime or not.  
\begin{thm}\label{thm:nm}
Let $\psi_1,\ldots , \psi_m$ be primitive Dirichlet characters and let $\ell_i\geq 1$ denote  the modulus of $\psi_i$.
\begin{enumerate}
\item Assume that $\ell_1,\ldots, \ell_m$ are pairwise coprime. Then the function 
$$ \alpha \mapsto \prod_{i=1}^m L(\alpha,\psi_i, 1/2),$$
belongs to $L^{2-\eps}([0,1])$ for all $\eps>0$ and for primes $q$ we have
\begin{align}
\label{eq:twistperiod}\frac{1}{\varphi(q)}\sum_{a=1}^{q-1} \prod_{i=1}^m L(a/q, \psi_i, 1/2)= \int_{0}^1\prod_{i=1}^m L(\alpha,\psi_i, 1/2) d\alpha+O_{\psi_i}(q^{-1/2}),
\end{align}
where the implied constant is allowed to depend on $\psi_1,\ldots, \psi_m$.

\item Let $m\geq 2$ and let $\ell_i=\ell$ be independent of $1\leq i\leq m$. Then for primes $q$ we have 
\begin{align}
\label{eq:twistperiod2}\sum_{a=1}^{q-1} \prod_{i=1}^m L(a/q, \psi_i, 1/2)= c(\psi_1,\ldots, \psi_m) q^{m/2}+O_{\ell,m}(q^{m/2-1/2}),
\end{align}
 where 
\begin{equation}\label{eq:cpsi}c(\psi_1,\ldots, \psi_m):=\left(\psi(-1)\left(\tfrac{1+i}{\sqrt{2}}\right)^m+\left(\tfrac{1-i}{\sqrt{2}}\right)^m\right)\frac{\prod_{i=1}^m \tau(\psi_i)}{(2\ell)^{m/2}} \psi(q)L( \overline{\psi},m/2),
\end{equation}
with $\psi:=\prod_{i=1}^m \psi_i$ and we put $c(\psi_1,\overline{\psi_1})=0$.
\end{enumerate}
\end{thm}
\begin{proof}
Put 
$$f(\alpha):=\prod_{i=1}^m L(\alpha, \psi_i, 1/2), $$
for $\alpha\in \R$ and $\alpha\notin \{\tfrac{b}{\ell_i}:b\in \Z,i=1,\ldots,m\}$. By Proposition \ref{prop:analyticprop} we have that $f(\alpha)$ is bounded as $\alpha\rightarrow 0$ and that
$$ |f(\alpha)|\ll_{\ell_i} \frac{1}{\prod_{i=1}^m|\!|\ell_i\alpha |\!|^{1/2}}, $$
where $|\!|x|\!|=\min(\{x\},1-\{x\})$ denotes the distance to the nearest integer.

(1): Assume that the $\ell_i$'s are pairwise coprime. For $0\leq \alpha<\max_j\tfrac{1}{2\ell_j}$ and  $ 1-\max_j\tfrac{1}{2\ell_j}<\alpha\leq1$ we get from Proposition \ref{prop:analyticprop} that $f$ is bounded. Now consider the case where $\max_j\tfrac{1}{2\ell_j}\leq \alpha\leq 1-\max_j\tfrac{1}{2\ell_j}$ so that $\alpha$ is bounded away from $0$ and $1$. Then for any $1\leq j\leq m$ it holds that $|\!|\ell_j\alpha|\!|\leq \eps$ with $0<\eps<\tfrac{1}{2\ell_j}$ exactly if
$$\alpha\in\left[\tfrac{b_j}{\ell_j}-\eps,\tfrac{b_j}{\ell_j}+\eps\right], $$
for some $1\leq b_j\leq \ell_j-1$. By the coprimality assumption we have
\begin{equation}\label{eq:gaps} \left|\frac{b_j}{\ell_j}-\frac{b_i}{\ell_i}\right|\geq \frac{1}{\ell_i\ell_j}, \end{equation}
for any $1\leq b_j\leq \ell_j-1$ and $1\leq b_i\leq \ell_i-1$.  Thus we conclude that in fact the following stronger bound holds:
\begin{equation}\label{eq:boundfalpha}|f(\alpha)|\ll_{\ell_i} \max_{i=1,\ldots, m}|\!|\ell_i\alpha |\!|^{-1/2}.\end{equation}
This shows that $f$ does indeed belong to $L^{2-\eps}([0,1])$ for all $\eps>0$. We will now bound the derivative of $f$ as this will be needed when applying summation formulas. For $\Re s>3/2$ we get by differentiating term-by-term
$$\frac{\partial}{\partial \alpha} \zeta(\alpha, c , s)= 2\pi i (\zeta(\alpha, c , s-1)-c\cdot \zeta(\alpha, c , s)),\quad c>0, \alpha\notin \Z,$$
and by uniqueness of analytic continuation this equation holds at $s=1/2$ as well. By the functional equation (see \cite[Thm. 3.2]{LauGar02} or (\ref{eq:FE}) above) we get for $0<\alpha<1$:
\begin{align}
\zeta(\alpha, c , -1/2)=\frac{\Gamma(3/2)}{(2\pi)^{3/2}}\left(e(\tfrac{3}{8}-\alpha c)\zeta(-c,\alpha,3/2)+e(-\tfrac{3}{8}+(1-\alpha) c)\zeta(c,1-\alpha,3/2)\right).
\end{align}   
Using the standard bound $\zeta(\alpha, c , \sigma)\ll_\sigma c^{-\sigma}$ for $\sigma>1$ (which follows directly from the definition (\ref{eq:defLerch})), we conclude that
$$\frac{\partial}{\partial \alpha} \zeta(\alpha, c , 1/2)\ll |\!|\alpha|\!|^{-3/2}.$$
Plugging this into the formula (\ref{eq:MCtwistedperiodic}) for the twisted period zeta function we get 
$$ \frac{\partial}{\partial \alpha} L(\alpha,\psi_i,1/2)\ll_{\ell_i} |\!|\ell_i\alpha |\!|^{-3/2}$$
By the chain rule and the disjointness property (\ref{eq:gaps}) of the poles of $L(\alpha,\psi_i,1/2),i=1,\ldots ,m$, we get for $\alpha$ bounded away from $0$:
\begin{equation}\label{eq:derivbnd}f^\prime(\alpha)\ll_{\ell_i} \frac{1}{\prod_i|\!|\ell_i\alpha |\!|^{3/2}}\ll_{\ell_i}\max_i|\!|\ell_i\alpha |\!|^{-3/2}.\end{equation}
As $\alpha\rightarrow 0$, we similarly get  
$$f^\prime(\alpha)=\sum_{i=1}^m \frac{\partial}{\partial \alpha} L(\alpha,\psi_i,1/2)\prod_{j\neq i}L(\alpha,\psi_j,1/2)\ll_{\ell_i}\max_i|\!|\ell_i\alpha |\!|^{-3/2},  $$
using here also the boundedness (\ref{eq:twistperiodic2}) of $L(\alpha,\psi_i,1/2)$ as $\alpha\rightarrow 0$. 

We may assume that $q>\prod_{i=1}^m \ell_i$. We will now split up the sum in question into subsegments where $f$ is smooth so that we can apply the Euler--Maclaurin summation formula. With this in mind, let 
$$0=a_0<a_1<\ldots<a_{k}<a_{k+1}=q-1,$$ 
be such that for each $1\leq i\leq m$ and $1\leq b<\ell_i$ there exists a unique $1\leq j\leq k$ such that 
$$\tfrac{a_{j}}{q}<\tfrac{b}{\ell_i}<\tfrac{a_j+1}{q}.$$
Note that $k=\sum_{i=1}^m (\ell_i-1)$. We divide the sum as follows:
 \begin{align*}
\sum_{a=1}^{q-1} f(\tfrac{a}{q})&= \sum_{i=0}^{k} \sum_{n=a_{i}+1}^{a_{i+1}} f(\tfrac{n}{q})
\end{align*}
We note that by construction $f$ is smooth on each of the intervals $[a_{i}+1,a_{i+1}]$. By applying the Euler--Maclaurin formula \cite[Thm. 4.2]{IwKo} to each of the inner sums we obtain
\begin{align*}
\sum_{n=a_{i}+1}^{a_{i+1}} f(\tfrac{n}{q})&=\int_{a_{i}+1}^{a_{i+1}} f(\tfrac{x}{q})dx+ \frac{1}{2}\left(f\left(\tfrac{a_{i}+1}{q}\right)+f\left(\tfrac{a_{i+1}}{q}\right)\right)+O\left( \int_{a_{i}+1}^{a_{i+1}} q^{-1}|f^\prime(\tfrac{x}{q})|dx\right). \end{align*}
Since $|\tfrac{a}{q}-\tfrac{b}{\ell_i}|\geq \tfrac{1}{\ell_i q}$ for any $0\leq b<\ell_i, i=1,\ldots,m$ and $1< a<q$ (by coprimality), it follows from (\ref{eq:boundfalpha}) that
$$f\left(\tfrac{a_{i}+1}{q}\right), f\left(\tfrac{a_{i+1}}{q}\right) \ll_{\ell_i} q^{1/2}.$$
Combining the bound (\ref{eq:derivbnd}) and the inequalities characterizing the sequence $a_i$, we conclude that 
$$f^\prime(\tfrac{x}{q})\ll_{\ell_i} \left|x-\tfrac{a_i+1}{q}\right|^{-3/2}+\left|\tfrac{a_{i+1}}{q}-x\right|^{-3/2},\quad  x\in [a_{i}+1,a_{i+1}].$$ 
Inserting these bound gives: 
\begin{align*}
\sum_{a=1}^{q-1} f(\tfrac{a}{q})&= \sum_{i=0}^{k} \int_{a_{i}+1}^{a_{i+1}} f(\tfrac{x}{q})dx+O_{\ell_i}(q^{1/2}),
\end{align*}
using that $k$ depends only on $\ell_1,\ldots, \ell_m$. For each $1\leq i\leq k$, we have 
\begin{equation}\label{eq:exprbnd} \int_{a_{i}}^{a_{i}+1} f(\tfrac{x}{q})dx\ll_{\ell_i} \int_{a_{i}}^{a_{i}+1} |x-\tfrac{b}{\ell_j}|^{-1/2}dx \ll_{\ell_i} 1, \end{equation}
where $a_{i}/q<b/\ell_j<a_{i+1}/q$, and similarly we can bound the expression (\ref{eq:exprbnd}) for $i\in \{0,k+1\}$. This yields
\begin{align*}
\sum_{a=1}^{q-1} f(\tfrac{a}{q})= \int_0^{q}  f(\tfrac{x}{q})dx+O_{\ell_i}(q^{1/2})=q\int_0^1 f(x)dx+O_{\ell_i}(q^{1/2}),
\end{align*}
as wanted.

(2): Now assume that there exists $\ell\geq 1$ such that $\ell_i=\ell$ for all $i$ and that $m\geq 2$. By Proposition \ref{prop:analyticprop} we see that the sum $\sum_{a=1}^{q-1} f(\tfrac{a}{q})$ is equal to 
\begin{align*}
\sum_{a=1}^{q-1} \prod_{i=1}^m\left(\frac{\tau(\psi_i)\overline{\psi_i}\left(\lfloor \tfrac{\ell a}{q}\rfloor\right)(1+i)}{2\ell^{1/2}} \{\tfrac{\ell a}{q}\}^{-1/2}+\frac{\tau(\psi_i)\overline{\psi_i}\left(\lceil \tfrac{\ell a}{q}\rceil\right)(1-i)}{2\ell^{1/2}} (1-\{\tfrac{\ell a}{q}\})^{-1/2}+O_\ell(1)\right).
\end{align*}
Let $\overline{q},\overline{\ell}\in \Z$ be such that $q\overline{q}+\ell\overline{\ell}=1$. After the change of variable $a\mapsto \overline{\ell}a$ the above equals:
\begin{align*}
\sum_{a=1}^{q-1} \prod_{i=1}^m\left(\frac{\tau(\psi_i)\overline{\psi_i}\left(\lfloor \tfrac{\ell \overline{\ell} a}{q}\rfloor\right)(1+i)}{2\ell^{1/2}} (\tfrac{a}{q})^{-1/2}+\frac{\tau(\psi_i)\overline{\psi_i}\left(\lceil \tfrac{\ell \overline{\ell} a}{q}\rceil\right)(1-i)}{2\ell^{1/2}} (1-\tfrac{a}{q})^{-1/2}+O_\ell(1)\right).
\end{align*}
Observe that for any $a\in (\Z/q\Z)^\times$ and $1\leq i\leq m$ at least two out of three of the terms in the above product are $O_\ell(1)$. So if we expand the product we conclude that the contribution from the cross-terms is bounded by $ O_{\ell,m}(q^{m/2-1/2})$. We thus get two main terms; one corresponding to $(a/q)^{-m/2}$ and one corresponding to $(1-a/q)^{-m/2}$.   

To evaluate the contribution from the main terms, we start by noticing that for $1\leq a\leq q-1$: 
$$\lfloor \tfrac{\ell \overline{\ell} a}{q}\rfloor= \lfloor \tfrac{(1-q\overline{q}) a}{q}\rfloor=\lfloor \tfrac{a}{q}-a\overline{q}\rfloor=-a\overline{q},$$
and similarly $\lceil \tfrac{\ell \overline{\ell} (q-a)}{q}\rceil=a\overline{q}+\ell\overline{\ell}$. With $\psi:=\prod_{i=1}^m \psi_i$, the first main term equals:
\begin{align*}
\sum_{a=1}^{q-1} \prod_{i=1}^m\left(\frac{\tau(\psi_i)\overline{\psi_i}(\lfloor \tfrac{\ell \overline{\ell} a}{q}\rfloor)(1+i)}{2\ell^{1/2}} (\tfrac{a}{q})^{-1/2}\right)&= q^{m/2}\left(\frac{1+i}{2\ell^{1/2}}\right)^{m} \psi(-q)\left(\prod_{i=1}^m \tau(\psi_i)\right)\left(\sum_{a=1}^{q-1} \frac{\overline{\psi}(a)}{a^{m/2}}\right)  
.\end{align*}
By estimating the tail (using partial summation in the latter case) we get   
$$\sum_{a=1}^{q-1} \frac{\overline{\psi}(a)}{a^{m/2}}=\begin{cases} 
L(\overline{\psi},m/2)+O_\ell(q^{1-m/2}),& m\geq 3\\
L(\overline{\psi},1)+O_\ell(q^{-1}),& m=2, \psi\neq \mathbf{1},
\end{cases} $$
and finally in the case $m=2, \psi= (\mathbf{1}\modulo \ell)$, we conclude by the principle of inclusion and exclusion, the classical formula $\sum_{n=1}^N1/n=\log N+\gamma+O(N^{-1})$ and $\log \lfloor (q-1)/d \rfloor=\log q/d+O_d(q^{-1})$ that:
\begin{align}
\sum_{a=1}^{q-1} \frac{\overline{\psi}(a)}{a}=\sum_{\substack{1\leq a\leq q-1\\ (a,\ell)=1}} \frac{1}{a}=\sum_{d|\ell}\mu(d) \sum_{a=1}^{\lfloor (q-1)/d \rfloor} \frac{1}{ad}=\sum_{d|\ell}\frac{\mu(d)}{d}(\log q-\log d+\gamma)+O_\ell(q^{-1}).
\end{align}
The second main term is given by the same expressions just with $(\frac{1-i}{2\ell^{1/2}})^m$ in place of $(\frac{1+i}{2\ell^{1/2}})^m$ and  $\psi(q)$ in place of $\psi(-q)$. Summing up the two  yields the required formula. The last claim of the proposition follows from:
\begin{align*}\psi(-1)\left(\tfrac{1+i}{\sqrt{2}}\right)^m+\left(\tfrac{1-i}{\sqrt{2}}\right)^m=
\left(\tfrac{1+i}{\sqrt{2}}\right)^m \left(\psi(-1)+\left(\tfrac{1-i}{\sqrt{2}}\right)^{2m}\right)=\left(\tfrac{1+i}{\sqrt{2}}\right)^m (\psi(-1)+(-i)^m). 
 \end{align*}
 Note in particular that in the case $m=2, \psi= (\mathbf{1}\modulo \ell)$ the two main terms cancel. 
\end{proof}
\subsection{Double moments of the Lerch zeta function}
Finally we calculate a double average of the Lerch zeta function.
\begin{thm}\label{thm:bothdirections} Let $q,\ell\geq 2$ be two integers. Then we have for $m\geq 1$
$$\sum_{a=1}^{q-1} \sum_{b=1}^{\ell-1} |\zeta(a/q, b/\ell, 1/2)|^{2m}=\frac{\zeta(m)}{2^{m-1}} q^{m}(\ell-1) +\zeta(m)(q-1) \ell^{m}+O_m(q^{m-1/2}\ell+ q\ell^{m-1/2}). $$
\end{thm}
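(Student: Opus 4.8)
The plan is to reduce the double average over the rational points $(a/q, b/\ell)$ to one-variable sums by isolating the singular behavior of $\zeta(\alpha,c,1/2)$ in each direction, exactly as was done for the single-variable moments. The key input is the expansion \eqref{eq:expinc2}, which we use to write, for $0<\alpha,c<1$,
\[
\zeta(\alpha,c,1/2)= \frac{1+i}{2}\alpha^{-1/2}+\frac{1-i}{2}(1-\alpha)^{-1/2}+c^{-1/2}+h(\alpha,c),
\]
where $h(\alpha,c)\ll 1$ uniformly (the tail $\sum_{n\ge 0}C_n(\alpha)c^n$ is bounded since $C_n(\alpha)\ll n^{-1/2}$ and $0<c<1$). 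Thus $\zeta(a/q,b/\ell,1/2)$ is a bounded perturbation of the sum of two ``$\alpha$-singular'' terms $\tfrac{1\pm i}{2}\{a/q\}^{\mp}$ plus one ``$c$-singular'' term $(b/\ell)^{-1/2}$. Taking the $2m$-th power $|\,\cdot\,|^{2m}=(\,\cdot\,)^m\overline{(\,\cdot\,)}^m$ and expanding multinomially, I would argue that the only terms surviving at the top order $q^m\ell$ and $q\ell^m$ are the ``pure'' terms: either the $c$-singular term to the full power $2m$ (contributing $\sum_b (b/\ell)^{-m}$, i.e.\ $\zeta(m)\ell^m$ after Lemma \ref{lem:key}, times $\sum_{a}1=q-1$), or the two $\alpha$-singular terms combined to the full power $2m$. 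The latter requires computing $\tfrac1{q-1}\sum_{a=1}^{q-1}\bigl|\tfrac{1+i}2\{a/q\}^{-1/2}+\tfrac{1-i}2(1-\{a/q\})^{-1/2}\bigr|^{2m}$; here the cross terms between the two singularities at $a/q\approx 0$ and $a/q\approx 1$ are genuinely lower order (each singularity contributes on disjoint ranges of $a$), so this reduces to $2\cdot\bigl(\tfrac1{\sqrt2}\bigr)^{2m}\sum_{a}a^{-m}=2^{1-m}\zeta(m)q^m$ by Lemma \ref{lem:key}, times $\sum_b 1=\ell-1$. Adding the two pure contributions gives the claimed main term $\tfrac{\zeta(m)}{2^{m-1}}q^m(\ell-1)+\zeta(m)(q-1)\ell^m$.

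The bookkeeping for the error term is the main technical point, and it is where I expect the real work (though not real difficulty) to lie. Writing $\zeta(a/q,b/\ell,1/2)=A_a+B_b+h_{a,b}$ with $A_a=\tfrac{1+i}2\{a/q\}^{-1/2}+\tfrac{1-i}2(1-\{a/q\})^{-1/2}$, $B_b=(b/\ell)^{-1/2}$ and $h_{a,b}\ll 1$, I would expand $(A_a+B_b+h_{a,b})^m\overline{(A_a+B_b+h_{a,b})}^m$ into a sum of monomials in $A_a,\overline{A_a},B_b,h_{a,b},\overline{h_{a,b}}$. Each such monomial factors as (function of $a$)$\times$(function of $b$), so the double sum factors into a product of two one-variable sums, each estimated by Lemma \ref{lem:key} using $|A_a|\ll \{a/q\}^{-1/2}+(1-\{a/q\})^{-1/2}$, $B_b=(b/\ell)^{-1/2}$, and $|h|\ll 1$. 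A monomial involving $A$-type factors to total degree $j$ in $a$ and $B$-type factors to total degree $k$ in $b$ (with $j+k\le 2m$, and strictly less whenever an $h$ appears) contributes $O(q^{\max(1,j/2)}\,\ell^{\max(1,k/2)})$ up to logs. One then checks that the only monomials reaching $q^m\ell$ or $q\ell^m$ are the two pure ones identified above, the monomial with $j=2m,k=0$ giving $q^m\cdot\ell$ and $j=0,k=2m$ giving $q\cdot\ell^m$; every other monomial loses at least a factor $q^{1/2}$ or $\ell^{1/2}$, yielding the error $O(q^{m-1/2}\ell+q\ell^{m-1/2})$. The logarithmic terms from the $\sigma=1$ case of Lemma \ref{lem:key} occur only for $j=2$ or $k=2$ and are absorbed since they come multiplied by a genuine power saving in the other variable. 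I would also separately treat the boundary contributions where $\{a/q\}$ or $b/\ell$ is close to $0$ or $1$ at scale $1/q$ or $1/\ell$ — these are handled by the same $a^{-\sigma}$, $(q-a)^{-\sigma}$ estimates and cause no trouble since $\sigma=m/2$ (for the dominant pure term) gives convergent or mildly divergent sums already accounted for.

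The one place requiring a little care is the computation of the constant $2^{1-m}$ in front of $q^m(\ell-1)$: after the change of variables $a\mapsto q-a$ one sees the two singular endpoints of $A_a$ contribute symmetrically, and at $a/q\to 0$ the factor is $|\tfrac{1+i}2|^{2m}=2^{-m}$ (the $(1-\{a/q\})^{-1/2}$ piece being $O(1)$ there), so the two endpoints together give $2\cdot 2^{-m}=2^{1-m}$ times $\sum_a a^{-m}\sim\zeta(m)q^m$. I would state this as a short lemma: $\sum_{a=1}^{q-1}|A_a|^{2m}=2^{1-m}\zeta(m)q^m+O(q^{m-1/2})$ for $m\ge 2$, proved by splitting $1\le a\le q/2$ and $q/2<a\le q-1$, using $|A_a|^{2m}=2^{-m}a^{-m}q^m(1+O(a/q)^{1/2})^{2m}$ on the first range (valid since the error multiplies a term that is then summable with a power saving), and Lemma \ref{lem:key}. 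With that lemma in hand, and the factorization of the double sum into one-variable pieces, the theorem follows by collecting the two pure leading terms and bounding the $O(q^{m-1})$-many mixed monomials by $O(q^{m-1/2}\ell+q\ell^{m-1/2})$.
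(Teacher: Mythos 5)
Your proposal is correct and follows essentially the same route as the paper: start from the expansion \eqref{eq:expinc2} of $\zeta(\alpha,c,1/2)$ into the $\alpha$-singular terms $\frac{1\pm i}{2}\{\alpha\}^{\mp}$, the $c$-singular term $c^{-1/2}$, and an $O(1)$ remainder, expand $|\cdot|^{2m}$ multinomially, observe that only the pure $\alpha$-terms and the pure $c$-term reach the leading orders $q^m\ell$ and $q\ell^m$ (splitting the $a$-range at $q/2$ to isolate the two endpoint singularities), and then apply Lemma \ref{lem:key} to the resulting one-variable sums and cross-terms. The paper compresses this into a single displayed identity with error $O_m\bigl(\sum_{k=1}^{2m-1}\sum_a\sum_b(a/q)^{-k/2}(b/\ell)^{-(2m-k)/2}\bigr)$ plus a citation of Lemma \ref{lem:key}, which is exactly the bookkeeping you spell out.
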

\begin{proof} 
Using (\ref{eq:expinc2}), we see that for $\alpha,c\in (0,1)$:
$$ \zeta(\alpha, c, 1/2)= \frac{1+i}{2}\alpha^{-1/2}+\frac{1-i}{2}(1-\alpha)^{-1/2}+c^{-1/2} +O(1).$$
Thus we get by expanding:
\begin{align*}
&\sum_{a=1}^{q-1} \sum_{b=1}^{\ell-1} |\zeta(a/q, b/\ell, 1/2)|^{2m} \\
&= \sum_{a=1}^{\lfloor q/2\rfloor} \sum_{b=1}^{\ell-1} \left(2^{-m}(a/q)^{-m}+(b/\ell)^{-m}\right)+\sum_{a=\lfloor q/2\rfloor+1}^{q-1} \sum_{b=1}^{\ell-1} \left(2^{-m}(1-a/q)^{-m}+(b/\ell)^{-m}\right)\\
&+ O_m\left(\sum_{k=1}^{2m-1}\sum_{a=1}^{q-1}\sum_{b=1}^{\ell-1}(a/q)^{-k/2}(b/\ell)^{-(2m-k)/2} \right).
\end{align*}
Now the theorem follows by applying Lemma \ref{lem:key}.
\end{proof}
\section{Applications to wide moments of Dirichlet $L$-functions and non-vanishing}
In this section we will combine the moment calculations from the previous section with the connection to Dirichlet $L$-functions, as in Proposition \ref{prop:BirchStevens},  to calculate a number of different wide moments. Then we will combine these wide moments calculation with some additional analytic input (subconvexity and second moment calculations) to obtain \emph{weak simultaneous non-vanishing} results. 
\subsection{Wide moments}\label{sec:widemoment}
First of all we will prove the asymptotic expansion as in Theorem \ref{thm:asympexpintro} extending results of Heath-Brown \cite{HeathBrown81} and Egami--Matsumoto \cite{EgamiMatsu02}.
\begin{proof}[Proof of Theorem \ref{thm:asympexpintro}]
By Proposition \ref{prop:BirchStevens} and (\ref{eq:periodformula1}), we see that for for $(b,\ell)=1$, we have
$$ \widehat{L(\chi_i,\cdot)}(\psi)=\frac{1}{\varphi(\ell)}\sum_{b \modulo \ell} L(\chi_i,b/\ell,1/2)\overline{\psi}(b)= \frac{\ell^{1/2}}{\varphi(\ell)}L(\chi_i\overline{\psi},1/2). $$
Thus by the Fourier identity (\ref{eq:fouriertrick}), we conclude that the lefthand side of (\ref{eq:widemomentmain}) is equal to
$$ \ell^{-m/2}\sum_{\substack{1\leq b<\ell\\ (b,\ell)=1}} \prod_{i=1}^m L(\chi_i, b/\ell,1/2). $$
Now using that for any function $f:\R\rightarrow \C$, we have by M\"{o}bius inversion;
$$ \sum_{\substack{1\leq b<\ell,\\ (b,\ell)=1}} f(b/\ell)
= \sum_{d|\ell} \mu(\ell/d) \sum_{b=1}^{d-1} f(b/d), $$
the claimed result follows from Theorem \ref{thm:asympexp}.
\end{proof}
Similarly, we get an asymptotic formula for a general $\psi$-twisted wide moment, with the further complication that we will restrict to only \emph{primitive} Dirichlet character. 
\begin{cor}\label{cor:uniforminq} For $i=1,\ldots, m$ with $m\geq 3$, let $\chi_i$ modulo $q_i$ be a Dirichlet character and put $q^*= \max(q_1,\ldots, q_m)$. Then we have for $\ell\gg (q^*)^{m/(2m-4)+\eps}$ prime and $\psi \modulo \ell$:
\begin{align}\label{eq:primitivemoment1}
&\frac{1}{(\ell-2)^{m-1}}\sideset{}{^*}\sum_{\substack{\psi_1,\ldots, \psi_m \, (\ell)\\ \psi_1\cdots \psi_m=\overline{\psi}}} \prod_{i=1}^m L(\chi_i\psi_i,1/2)\\
\nonumber&= L(\chi\psi,  m/2)+O_{m,\eps}((q^*)^{m/4+\eps}\ell^{1-m/2}+(q^*)^{1/4+\eps}\ell^{-1/2}),      
\end{align}
where $\chi=\chi_1\cdots \chi_m$. Here the sum is restricted to primitive Dirichlet characters $\psi_1,\ldots, \psi_m$. 
\end{cor}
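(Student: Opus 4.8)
The plan is to follow the same pattern used in the proof of Theorem~\ref{thm:asympexpintro}, namely to identify the primitive-twist wide moment on the left-hand side with a moment of twisted Hurwitz zeta functions via the Fourier identity~(\ref{eq:fouriertrick}), and then to invoke Theorem~\ref{thm:generaltwist}. First I would record that, for $\ell$ prime and $\psi_i$ ranging over characters mod $\ell$, the only imprimitive character is the principal one $\psi_0$; hence the restricted sum $\sideset{}{^*}\sum$ over primitive $\psi_i$ with $\psi_1\cdots\psi_m=\overline{\psi}$ differs from the full sum over $\wide(\ell,m;\overline{\psi})$ only by terms in which at least one $\psi_i=\psi_0$. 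Using $\widehat{L(\chi_i,\cdot)}(\psi)=\frac{\ell^{1/2}}{\varphi(\ell)}L(\chi_i\overline{\psi},1/2)$ (established in the proof of Theorem~\ref{thm:asympexpintro}, valid here since $(b,\ell)=1$ automatically for $1\le b<\ell$), the Fourier identity~(\ref{eq:fouriertrick}) gives
$$
\sum_{\substack{\psi_1\cdots\psi_m=\overline{\psi}}}\prod_{i=1}^m L(\chi_i\psi_i,1/2)
=\left(\frac{\varphi(\ell)}{\ell^{1/2}}\right)^{m-1}\cdot\ell^{-m/2}\sum_{b=1}^{\ell-1}\psi(b)\prod_{i=1}^m L(\chi_i,b/\ell,1/2),
$$
where I use that $\varphi(\ell)=\ell-1$ for $\ell$ prime. (One must be slightly careful with which variable carries the twist: unwinding $\widehat{\prod L_i}$ at $\overline\psi$ produces exactly $\frac1{\varphi(\ell)}\sum_b \psi(b)\prod_i L(\chi_i,b/\ell,1/2)$ up to the product of normalizing factors.)

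Next I would apply Theorem~\ref{thm:generaltwist} (with $m\ge 3$, which is why that hypothesis appears) to evaluate $\sum_{b=1}^{\ell-1}\psi(b)\prod_i L(\chi_i,b/\ell,1/2)=\ell^{m/2}L(\chi\psi,m/2)+O_{m,\eps}((q^*)^{m/4+\eps}\ell+(q^*)^{1/4+\eps}\ell^{m/2-1/2+\eps})$. Substituting and dividing by the count $(\ell-2)^{m-1}$ of primitive tuples — here I would check that $\#\wide^*(\ell,m;\overline\psi)=(\ell-2)^{m-1}$ for $\ell$ prime, since once $\psi_1,\dots,\psi_{m-1}$ are chosen among the $\ell-2$ non-principal characters one needs $\psi_m=\overline\psi\psi_1^{-1}\cdots\psi_{m-1}^{-1}$ to be non-principal as well, a condition that removes a negligible (in fact lower-order) set once $\ell$ is large; alternatively absorb the discrepancy between $(\ell-2)^{m-1}$ and $(\ell-1)^{m-1}$ and between the full and restricted sums into the error term. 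The main term becomes $L(\chi\psi,m/2)$ and the error term, after dividing $\ell^{-m/2}\cdot(\ell-2)^{-(m-1)}\cdot(\ell-1)^{m-1}$ times the error from Theorem~\ref{thm:generaltwist}, is $O_{m,\eps}((q^*)^{m/4+\eps}\ell^{1-m/2}+(q^*)^{1/4+\eps}\ell^{-1/2+\eps})$, matching~(\ref{eq:primitivemoment1}) up to the $\eps$ in the exponent which can be rescaled.

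The one genuine obstacle is controlling the imprimitive contribution: the terms with some $\psi_i=\psi_0$ in the full-sum expansion must be shown to be of lower order than the stated error. When $\psi_j=\psi_0$ for a single index $j$, that term is $L(\chi_j,1/2)$ (a bounded constant) times a wide moment of the remaining $m-1$ factors with a modified twist, and by induction (or by Theorem~\ref{thm:generaltwist} applied with $m-1$ factors, using $m-1\ge 2$ and Remark~\ref{remark:m=2} in the boundary case $m=3$) this is $O(\ell^{(m-1)/2}+\ldots)$, which after the normalization $\ell^{-m/2}(\ell-1)^{m-1}/(\ell-2)^{m-1}$ contributes $O(\ell^{-1/2})$ — absorbed into the claimed error. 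Tuples with two or more principal $\psi_i$ are even smaller. The hypothesis $\ell\gg(q^*)^{m/(2m-4)+\eps}$ is exactly what makes the first error term $(q^*)^{m/4+\eps}\ell^{1-m/2}$ smaller than the main term $L(\chi\psi,m/2)$ (which is $\asymp 1$), so the stated range is precisely the regime in which~(\ref{eq:primitivemoment1}) is a genuine asymptotic; verifying this inequality $(q^*)^{m/4}\ell^{1-m/2}\ll 1 \iff \ell \gg (q^*)^{m/(2m-4)}$ is a short computation I would include.
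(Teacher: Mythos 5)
Your overall strategy mirrors the paper's: rewrite the primitive‐restricted wide moment via inclusion–exclusion over subsets of indices where $\psi_i=\psi_0$, convert each inner sum to a twisted Hurwitz moment through the Fourier identity \eqref{eq:fouriertrick}, and apply Theorem~\ref{thm:generaltwist} (with Remark~\ref{remark:m=2} for the $m-2$‐factor case and convexity for the singleton case). This is indeed the structure of the proof in the paper. However, there is a genuine gap in your estimation of the imprimitive contributions.

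You assert that when $\psi_j=\psi_0$, the factor $L(\chi_j\psi_0,1/2)$ is ``a bounded constant.'' It is not: $\chi_j$ is a character modulo $q_j$, and the corollary is stated \emph{uniformly} in $q^*=\max(q_1,\ldots,q_m)$ --- witness the explicit $q^*$ dependence in both the error term $(q^*)^{m/4+\eps}\ell^{1-m/2}+(q^*)^{1/4+\eps}\ell^{-1/2}$ and the hypothesis $\ell\gg(q^*)^{m/(2m-4)+\eps}$, which would be vacuous if the $\chi_i$ were fixed. The correct bound is the convexity bound $L(\chi_j\psi_0,1/2)\ll (q^*)^{1/2}$, and one must propagate this factor (and its $|I|$-fold power) through the estimation of every $I\neq\emptyset$ term. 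With your assumption the imprimitive contribution is $O(\ell^{-1/2})$; with the correct convexity input it is
\[
\ll_{m,\eps}\frac{1}{\ell^{m-1}}\Biggl(\sum_{i=1}^{m-3}(q^*)^{i/2}\ell^{m-1-i}+\ell^{1+\eps}(q^*)^{1/2+\eps}(q^*)^{(m-2)/2}+(\ell q^*)^{1/2}(q^*)^{(m-1)/2}+(q^*)^{m/2}\Biggr),
\]
which only lands inside the claimed error after a computation that uses $\ell\gg(q^*)^{m/(2m-4)+\eps}$; this is also where the range hypothesis is genuinely needed (to tame the errors from Theorem~\ref{thm:generaltwist} applied to the inner sums with $|I|<m-2$), not merely to make the main term dominate. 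As a secondary matter, the normalization in your displayed Fourier-identity formula is off: it should carry $\varphi(\ell)^{m-1}\ell^{-m/2}$, not $(\varphi(\ell)/\ell^{1/2})^{m-1}\ell^{-m/2}$; this is a slip you half-anticipate, but as written it would throw off the power of $\ell$.
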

\begin{proof} Using the principle of inclusion and exclusion, we see that the lefthand side of (\ref{eq:primitivemoment1}) is equal to
\begin{align*}
\frac{1}{(\ell-2)^{m-1}} \sum_{I\subset\{1,\ldots, m\}} (-1)^{|I|}\prod_{i\in I} L(\chi_i\psi_0,1/2) \sum_{\substack{\psi_i\, (\ell), i \notin I\\ \Pi_{i\notin I}\psi_i=\psi}} \prod_{i\notin I} L(\chi_i\psi_i,1/2) ,
\end{align*}
where $\psi_0$ denotes the principal Dirichlet character modulo $\ell$. Combining the Fourier identity (\ref{eq:fouriertrick}), the calculation of Fourier transforms as in Proposition \ref{prop:BirchStevens} and the moment calculation in Theorem \ref{thm:generaltwist}, we can evaluate each of the inner sums above. The main term comes from $I=\emptyset$ with the error-term $O_{m, \eps}((q^*)^{m/4+\eps}\ell^{1-m/2}+(q^*)^{1/4+\eps}\ell^{-1/2})$. For $I\neq \emptyset$, we have the following bounds:
\begin{align*}&\ll_\eps \begin{cases}\ell^{m-|I|-1} L(\psi \prod_{i\notin I} \chi_i,(m-|I|)/2)=O( \ell^{m-|I|-1}), &  |I|<m-2\\ \ell^{1+\eps} (q^*)^{1/2+\eps} ,&  |I|=m-2\\ L(\psi \prod_{i\notin I}\chi_i,1/2)=O( (\ell q^*)^{1/2}) , & |I|=m-1   \\ 1, & |I|=m. \end{cases}
\end{align*}
applying respectively, Theorem \ref{thm:generaltwist} for $|I|<m-2$ (using here the assumption $\ell\gg (q^*)^{m/(2m-4)+\eps}$ to bound the error-term in Theorem \ref{thm:generaltwist}), Remark \ref{remark:m=2} for $|I|=m-2$ and the convexity bound for $|I|=m-1$.  Thus the combined contribution from $I\not= \emptyset$ can be bounded by
\begin{align*}
& \ll_{m,\eps} \frac{1}{\ell^{m-1}} \Biggr(\sum_{i=1}^{m-3} (q^*)^{i/2}\ell^{m-1-i}+\ell^{1+\eps}(q^*)^{1/2+\eps}(q^*)^{(m-2)/2}\\
&\qquad\qquad\qquad\qquad\qquad+(\ell q^*)^{1/2}(q^*)^{(m-1)/2}+(q^*)^{m/2} \Biggr),   
\end{align*}  
using here the convexity bound $L(\psi_0\chi_i,1/2)\ll (q^*)^{1/2}$ for $i\in I$. Now it is easy to check that the above is indeed $O_{m, \eps}((q^*)^{m/4+\eps}\ell^{1-m/2}+(q^*)^{1/4+\eps}\ell^{-1/2})$ as wanted. 
\end{proof}
Next, we will obtain an asymptotic formula uniform in $m$ for wide moments with trivial twists. Again we will use the principle of inclusion and exclusion to restrict the sum to primitive Dirichlet characters.
\begin{cor}\label{cor:uniforminm}
Let $m\geq 3$ and let $\ell$ be prime. Then we have
\begin{align}\label{eq:primitivemoment}
\frac{1}{(\ell-2)^{m-1}}\sideset{}{^*}\sum_{\substack{\psi_1,\ldots, \psi_m \, (\ell)\\ \psi_1\cdots \psi_m=1}} \prod_{i=1}^m L(\psi_i,1/2)= \zeta(m/2)+O(\ell^{-1/2}+\ell^{1-m/2} \log \ell+\ell^{-1}e^{O(m/\ell)}), 
\end{align}
as $\ell\rightarrow \infty$, uniformly in $m,\ell$. Here the sum is restricted to primitive Dirichlet characters $\psi_1,\ldots, \psi_m$ modulo $\ell$. 
\end{cor}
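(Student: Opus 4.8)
The plan is to run exactly the same inclusion–exclusion argument as in the proof of Corollary \ref{cor:uniforminq}, but now feeding in the uniform-in-$m$ moment bound of Theorem \ref{thm:uniforminm} together with the Fourier identity (\ref{eq:fouriertrick}) and the Fourier transform computation from the proof of Theorem \ref{thm:asympexpintro}, and then track the $m$-dependence of every error term carefully. First I would note that, with all $\chi_i=1$ and all $q_i=1$, the computation of Fourier transforms gives $\widehat{L(1,\cdot)}(\psi_i)=\frac{\ell^{1/2}}{\varphi(\ell)}L(\psi_i,1/2)$ for primitive $\psi_i$ (here $\ell$ prime, so $\varphi(\ell)=\ell-1$ and a character mod $\ell$ is either principal or primitive). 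Writing the primitive-restricted sum via inclusion–exclusion over subsets $I\subseteq\{1,\dots,m\}$, and using $L(\psi_0,1/2)=\zeta(1/2)\prod_{p\mid\ell}(1-p^{-1/2})=O(1)$ for the principal character (a bounded constant, the key point being it is independent of $m$), the left side of (\ref{eq:primitivemoment}) becomes
\[
\frac{1}{(\ell-2)^{m-1}}\sum_{I\subseteq\{1,\dots,m\}}(-1)^{|I|} L(\psi_0,1/2)^{|I|}\, \ell^{-(m-|I|)/2}\!\!\sum_{\substack{1\le b<\ell\\(b,\ell)=1}}\prod_{i\notin I} \zeta(0,b/\ell,1/2),
\]
after unfolding the inner wide moment through (\ref{eq:fouriertrick}) exactly as in the proof of Theorem \ref{thm:asympexpintro}. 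The term $I=\emptyset$ is handled by Theorem \ref{thm:uniforminm}: it contributes $\ell^{-m/2}(\ell-1)^{-(m-1)}\cdot(\ell-2)^{-(m-1)}\cdots$ — more precisely $\frac{1}{(\ell-2)^{m-1}}\ell^{-m/2}\big(\ell^{m/2}\zeta(m/2)(1+o(1))+O(\ell^{m/2-1/2}+\ell\log\ell)\big)$, noting one must also replace the sum over $1\le b<\ell$ coprime to $\ell$ by the sum over $1\le b\le \ell-1$ (they differ only in the $b$ that vanish, i.e. $b=\ell$ is excluded anyway since $\ell$ is prime), and replace $(\ell-2)^{m-1}$ by $(\ell-1)^{m-1}$ at the cost of a factor $1+O(m/\ell)$, i.e. $e^{O(m/\ell)}$; this produces the main term $\zeta(m/2)$ and the error $O(\ell^{-1/2}+\ell^{1-m/2}\log\ell + \ell^{-1}e^{O(m/\ell)})$.

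For $I\neq\emptyset$ with $|I|=j\ge 1$, there are $\binom{m}{j}$ subsets, each contributing (in absolute value, applying Theorem \ref{thm:uniforminm} to the inner $(m-j)$-fold moment, Remark \ref{remark:m=2} when $m-j=2$, the convexity bound when $m-j=1$, and $1$ when $j=m$)
\[
\ll \frac{C_0^{\,j}}{(\ell-2)^{m-1}}\,\ell^{-(m-j)/2}\Big(\ell^{(m-j)/2}\zeta\big((m-j)/2\big)+O(\ell^{(m-j)/2-1/2}+\ell\log\ell)\Big)
=\frac{C_0^{\,j}}{(\ell-2)^{m-1}}\big(\ell^{m-j-1}+O(\cdots)\big),
\]
where $C_0=|L(\psi_0,1/2)|$ is an absolute constant. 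Summing over $j$ and $I$, the dominant piece is $\sum_{j\ge 1}\binom{m}{j}C_0^{\,j}\ell^{m-j-1}(\ell-2)^{-(m-1)}$; since $\ell$ is large this is $\ll \ell^{-1}\sum_{j\ge1}\binom{m}{j}(C_0/\ell)^j=\ell^{-1}\big((1+C_0/\ell)^m-1\big)\ll \ell^{-1}\cdot\frac{m C_0}{\ell}e^{mC_0/\ell}$, which is absorbed into $O(\ell^{-1}e^{O(m/\ell)})$ once $m=o(\ell)$ is assumed (and more crudely into $O(\ell^{-2}m e^{O(m/\ell)})$, hence into the stated error). The subleading error pieces from $j\ge1$ are smaller by a factor $\ell^{-1/2}$ or $\ell^{1-(m-j)/2}\log\ell$ and are likewise dominated. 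Collecting the $I=\emptyset$ main term and all error contributions yields (\ref{eq:primitivemoment}).

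The routine but genuinely delicate part — and the only real obstacle — is bookkeeping the uniformity in $m$: one must make sure that the binomial factors $\binom{m}{j}$ from inclusion–exclusion are compensated by the powers $(C_0/\ell)^j$ so that the whole $I\ne\emptyset$ contribution telescopes into $(1+C_0/\ell)^m-1 = e^{O(m/\ell)}-1$, and that no step secretly produces a constant depending on $m$ (in particular the constant $C_0=|L(\psi_0,1/2)|$ and the implied constants in Theorem \ref{thm:uniforminm} are $m$-independent by construction). The replacement $(\ell-2)^{m-1}\leftrightarrow(\ell-1)^{m-1}\leftrightarrow\ell^{m-1}$ must be done to relative precision, each swap costing a multiplicative $e^{O(m/\ell)}$, which is exactly the shape of error appearing in the statement. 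Everything else is a direct substitution of earlier results.
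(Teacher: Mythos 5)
Your approach matches the paper's exactly: inclusion--exclusion over subsets $I$ where the $\psi_i$ are forced to be principal, unfolding via the Fourier identity (\ref{eq:fouriertrick}) and Proposition \ref{prop:BirchStevens}, applying Theorem \ref{thm:uniforminm} to each inner moment (with Remark \ref{remark:m=2} and convexity for the small leftover moments, a point the paper itself glosses over), and finally collecting the $I\neq\emptyset$ terms into the binomial sum $(1+C/\ell)^m-1\ll\ell^{-1}e^{O(m/\ell)}$. One small slip to fix: after unfolding with (\ref{eq:fouriertrick}) your displayed formula should carry the normalization factor $\varphi(\ell)^{m-|I|-1}=(\ell-1)^{m-|I|-1}$ in front of $\ell^{-(m-|I|)/2}\sum_b\prod_{i\notin I}\zeta(0,b/\ell,1/2)$ --- without it the $I=\emptyset$ term would be $O(\ell^{1-m})$ rather than $\zeta(m/2)$ --- and your subsequent line quietly reinstates it (writing $\ell^{m-j-1}$), so the bookkeeping should be made consistent, but the structure and all the final bounds match the paper's proof.
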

\begin{proof}
We proceed as above using the principle of inclusion and exclusion to write the lefthand side of (\ref{eq:primitivemoment}) as
\begin{align*}
\frac{1}{(\ell-2)^{m-1}} \sum_{I\subset\{1,\ldots, m\}} (-1)^{|I|}\prod_{i\in I} L(\psi_0,1/2) \sum_{\psi_i\, (\ell), i \notin I} \prod_{i\notin I} L(\psi_i,1/2) ,
\end{align*}
where $\psi_0$ denotes the principal character modulo $\ell$. By the Fourier identity (\ref{eq:fouriertrick}) and Proposition \ref{prop:BirchStevens}, we can apply Theorem \ref{thm:uniforminm} to each of the inner sums above. Using that $L(\psi_0,1/2)\leq C$ for some constant $C$ independent of $\ell$, we see that the lefthand side of (\ref{eq:primitivemoment}) is
\begin{align*}
&\zeta(m/2)+ O\left(\ell^{-1/2}+\ell^{1-m/2}\log \ell+\sum_{i=1}^m\binom{m}{i}(C/\ell)^{i}\right) \\
&=\zeta(m/2)+ O\left(\ell^{-1/2}+\ell^{1-m/2}\log\ell +((1+C\ell^{-1})^m-1)\right)\\
&= \zeta(m/2)+ O\left(\ell^{-1/2}+\ell^{1-m/2}\log\ell +\ell^{-1}e^{O(m/\ell)}\right),
\end{align*} 
as wanted.
\end{proof}
\subsubsection{Further examples of wide moments}
We now consider wide moments of Dirichlet $L$-functions but this time twisted by Gau{\ss} sums. This corresponds exactly to moments of twisted periodic zeta functions as in Section \ref{sec:momentsLerch}. We observe that in the case where the moduli are co-prime there is massive cancellation in the wide moments, which means that these results are not very useful for obtaining non-vanishing results. On the other hand, it is usually very hard to obtain such a huge amount of cancellation using an approximate functional approach. When the moduli are equal one gets the same asymptotic behavior as in Corollary \ref{cor:uniforminq}.   
\begin{cor}\label{cor:nm}Let $\psi_1,\ldots , \psi_m$ be primitive Dirichlet characters and let $\ell_i\geq 1$ be the modulus of $\psi_i$.
\begin{enumerate}
\item Assume that $m\geq 3$ and that $\ell_1,\ldots, \ell_m$ are pairwise coprime. Then for $q$ prime it holds that
$$\frac{1}{(q-2)^{m-1}}\sideset{}{^\ast}\sum_{\substack{\chi_1,\ldots, \chi_m(q)\\ \chi_1\cdots \chi_m=\mathbf{1}}} \prod_{i=1}^m \tau(\overline{\chi_i})L(\chi_i\psi_i,1/2)=\int_0^1 \prod_{i=1}^mL(\alpha,\psi_i,1/2) d\alpha+O_{\ell_i}(q^{-1/2}).$$
Here the sum is restricted to primitive Dirichlet characters $\chi_1,\ldots, \chi_m$.
\item Assume that $m\geq 2$ and that there exists $\ell\geq 1$ such that $\ell_i=\ell$ for all $i=1,\ldots, m$. Then for $q$ prime it holds that
\begin{align}
\label{eq:nm}\frac{1}{(q-2)^{m-1}}\sideset{}{^*}\sum_{\substack{\chi_1,\ldots, \chi_{m}\, (q)\\ \chi_1\cdots \chi_{m}=1}}\prod_{i=1}^{m}\epsilon(\chi_i) L(\psi_i\chi_{i},1/2)  = c(\psi_1,\ldots, \psi_m)+O_{\ell,m}(q^{-1/2}),\end{align}
Here the sum is restricted to primitive Dirichlet characters $\chi_1,\ldots, \chi_m$, $c(\psi_1,\ldots, \psi_m)$ is given by the formula (\ref{eq:cpsi}), and $\epsilon(\chi):=\tau(\overline{\chi})/q^{1/2}$.
\end{enumerate}
\end{cor}
\begin{proof}
The proof follows as above combining the Fourier identity (\ref{eq:fouriertrick}), Proposition \ref{prop:BirchStevens}, the principle of inclusion and exclusion, the bound  
$$\nu_{1/2}(\chi^*, 1, q/q^*)\tau(\chi^*)\ll q^{1/2},\quad \text{for Dirichlet characters }\chi\modulo q,$$
(recall the notation from Proposition \ref{prop:BirchStevens}) and finally Theorem \ref{thm:nm}. We will skip the details.
\end{proof}
Finally, we obtain a calculation of the following double average wide moment.
\begin{cor}\label{cor:double}
Let $q,\ell$ be primes and $m\geq 2$. Then we have:
\begin{align*} 
&\frac{\ell^m}{(\varphi(q)\varphi(\ell))^{2m-1}}\sideset{}{^*}\sum_{\substack{\chi_1,\ldots, \chi_{2m}\, (q),\, \psi_1,\ldots, \psi_{2m}\, (\ell)\\  \Pi_{i=1}^m \chi_i\overline{\chi_{m+i}}=\Pi_{i=1}^m \psi_i\overline{\psi_{m+i}}=1}} \prod_{i,j=1}^{2m}\tau(\overline{\chi_i})L(\chi_i \psi_j,1/2)\\
&=\frac{\zeta(m)}{2^{m-1}} q^{m}(\ell-1) +\zeta(m)(q-1) \ell^{m}+O_m(q^{m-1/2}\ell+q\ell^{m-1/2})),   \end{align*}
as $q+\ell \rightarrow \infty$. Here the sum is restricted to primitive Dirichlet characters $\chi_1,\ldots, \chi_{2n}$ and $\psi_1,\ldots, \psi_{2n}$,
\end{cor}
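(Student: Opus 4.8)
The plan is to reduce Corollary \ref{cor:double} to Theorem \ref{thm:bothdirections} via the Fourier identity (\ref{eq:fouriertrick}), applied simultaneously in the two directions $\Z/q\Z$ and $\Z/\ell\Z$, followed by the principle of inclusion and exclusion to restrict to primitive characters, exactly as in the proofs of Corollaries \ref{cor:uniforminq}--\ref{cor:nm}. First I would take $G=(\Z/q\Z)^\times\times(\Z/\ell\Z)^\times$ and, for fixed $i,j$, regard $L(\chi_i\psi_j,1/2)$ as (a scalar multiple of) the Fourier transform of the function $(a,b)\mapsto \zeta(a\ell/q,b/\ell,1/2)e(ab\cdot(\text{unit}))$, using Proposition \ref{prop:BirchStevens} with $m=2$ directions of variation; concretely, combining (\ref{eq:periodformula1}) with $\ell$ fixed and letting both $a\modulo q$ and $b\modulo \ell$ vary gives $\widehat{L}$ in terms of $\zeta(a\ell/q,b/\ell,1/2)$. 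Then the double wide sum over the $\chi_i,\psi_j$ with the two product constraints $\prod_i\chi_i\overline{\chi_{m+i}}=\prod_j\psi_j\overline{\psi_{m+j}}=1$ collapses, by two applications of (\ref{eq:fouriertrick}), to
\begin{align*}
\frac{1}{\varphi(q)\varphi(\ell)}\sum_{a\modulo q}\sum_{b\modulo\ell}\left|\zeta\!\left(\frac{a\ell}{q},\frac{b}{\ell},\tfrac12\right)\right|^{2m},
\end{align*}
up to the normalizing powers of $\ell$ and the Gau\ss-sum factors $\tau(\overline{\chi_i})$, which are absorbed into the duality (the product of $2m$ characters being trivial makes the total Gau\ss-sum contribution of absolute value a clean power of $q$). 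Since $\ell$ is coprime to $q$, the map $a\mapsto a\ell$ is a bijection of $\Z/q\Z$, so this is exactly the sum in Theorem \ref{thm:bothdirections} (after the usual M\"obius/coprimality cleanup, which only introduces lower-order terms of the stated size).

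Next I would handle the restriction to primitive characters: for $q,\ell$ prime, a character modulo $q$ fails to be primitive only if it is principal, so inclusion--exclusion over the subsets $I,J$ of indices where $\chi_i$ (resp.\ $\psi_j$) is taken principal produces $O(4^m)$ extra terms, each of which is a product of trivially bounded factors $L(\chi_0,1/2)\ll q^{1/2}$, $L(\psi_0,1/2)\ll \ell^{1/2}$ times a lower-width wide moment; bounding these by the convexity bound and Theorem \ref{thm:bothdirections} with a smaller exponent shows they are all absorbed into the error term $O_m(q^{m-1/2}\ell+q\ell^{m-1/2})$. Assembling the main term from $I=J=\emptyset$ with Theorem \ref{thm:bothdirections} then yields
\begin{align*}
\frac{\zeta(m)}{2^{m-1}}q^m(\ell-1)+\zeta(m)(q-1)\ell^m+O_m(q^{m-1/2}\ell+q\ell^{m-1/2}),
\end{align*}
as claimed.

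The main obstacle I anticipate is bookkeeping rather than analysis: keeping track of the Gau\ss-sum normalizations $\tau(\overline{\chi_i})$ and the weight $\nu_{1/2}(\chi^*,\psi,q/q^*)$ in Proposition \ref{prop:BirchStevens} across $2m$ characters in two directions simultaneously, and verifying that the constraint $\prod_i\chi_i\overline{\chi_{m+i}}=1$ is precisely what is needed for the Fourier trick (\ref{eq:fouriertrick}) to apply with the $L$'s of the form $|\cdot|^{2m}=\prod L\cdot\overline{\prod L}$ rather than a pure $2m$-th power. One must check that conjugating half the characters corresponds, on the period side, to complex-conjugating half the Lerch factors, so that the inner sum really is $|\zeta|^{2m}$ and not $\zeta^{2m}$; this is where the precise placement of $\overline{\chi_{m+i}}$ in the product constraint matters. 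Once this is set up correctly the rest is a routine combination of the already-established pieces, which is why the proof can reasonably be abbreviated with ``we will skip the details'' as in Corollary \ref{cor:nm}.
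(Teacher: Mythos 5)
Your proposal follows essentially the same approach as the paper: the paper's proof of Corollary \ref{cor:double} is just the one-line remark that one applies the proof scheme of Corollary \ref{cor:nm} (Fourier identity (\ref{eq:fouriertrick}), Proposition \ref{prop:BirchStevens}, inclusion--exclusion over imprimitive indices with the bound $\nu_{1/2}(\chi^*,1,q/q^*)\tau(\chi^*)\ll q^{1/2}$) with Theorem \ref{thm:bothdirections} replacing Theorem \ref{thm:nm}, which is exactly what you describe. You also correctly flag the two points the paper silently elides — the change of variable $a\mapsto a\ell$ converting $\zeta(a\ell/q,\cdot,\cdot)$ into the $\zeta(a/q,\cdot,\cdot)$ appearing in Theorem \ref{thm:bothdirections}, and the fact that the constraint $\prod_i\chi_i\overline{\chi_{m+i}}=1$ (rather than a plain product being trivial) is what produces $|\zeta|^{2m}$ rather than $\zeta^{2m}$ on the period side — so your proposal is a faithful, slightly more explicit version of the paper's argument.
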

\begin{proof}
The result follows using the same proof scheme as in Corollary \ref{cor:nm} using now Theorem \ref{thm:bothdirections}.
\end{proof}
\subsection{Non-vanishing}
In this final section we will use the wide moment calculations in Corollaries \ref{cor:uniforminq} and \ref{cor:uniforminm} combined with certain additional analytic input to obtain the non-vanishing results in Subsection \ref{sec:state} (notice that in the introduction we have interchanged $q\leftrightarrow \ell$ and $\chi\leftrightarrow \psi $ in the notation). Similarly, one could get non-vanishing results using Corollaries \ref{cor:nm} and \ref{cor:double}, but the results one obtains this way are not that interesting.
\begin{proof}[Proof of Theorem \ref{cor:nonvaniPY}] We see that in the range $q\gg (\ell^*)^{m/(2m-4)+\eps}$, we have from Corollary \ref{cor:uniforminq} that
$$\sideset{}{^*}\sum_{\substack{\chi_1,\ldots, \chi_m \, (q)\\ \chi_1\cdots \chi_m=\chi}} \prod_{i=1}^m L(\psi_i\chi_i,1/2) \gg_m  q^{m-1},$$
as $q\rightarrow \infty$. Using the subconvexity bound  
$$ L(\psi_i\chi_i,1/2)\ll_\eps (q\ell^*)^{1/6+\eps}, $$
due to Petrow--Young \cite{PetrowYoung19} we conclude
\begin{align*}
 (q\ell^*)^{m/6+\eps} \sideset{}{^*}\sum_{\substack{\chi_1,\ldots, \chi_m \, (q)\\ \chi_1\cdots \chi_m=\chi\\\Pi_{i=1}^m L(\psi_i\chi_i,1/2)\neq 0 }}\, 1 \gg_m  q^{m-1} ,
\end{align*}  
which yields the wanted.
\end{proof}
Now we restrict to the case of trivial twists in Theorem \ref{cor:nonvaniPY} (i.e. $\psi_i=1$ for $i=1,\ldots, m$ and $\chi=1$). Then we can do even better due to the following second moment calculation of Bettin \cite{Be17}; we have uniformly for $3 \leq m=o(q^{1/2} \log q)$ that   
\begin{equation}\label{eq:Bettin17} \frac{1}{(q-2)^{m-1}}\sideset{}{^*}\sum_{\substack{\chi_1,\ldots, \chi_m \, (q),\\ \chi_1\cdots \chi_m=1}} \prod_{i=1}^m|L(\chi_i, 1/2)|^2\sim \frac{\zeta(m/2)^2}{\zeta(m)}(\log (q/8\pi)+\gamma)^m,  \end{equation}
as $q\rightarrow \infty$. Here $\gamma$ denotes the Euler--Mascheroni constant and the sum is restricted to primitive Dirichlet characters $\chi_1,\ldots, \chi_m$.

\begin{proof}[Proof of Theorem \ref{cor:nonvaniBettin}]
Recall the notation of $\wide^*(q, m;1)$ in (\ref{eq:wide*}). We have by Cauchy--Schwarz:
\begin{align*}&\left|\sum_{(\chi_i)\in \wide^*(q, m;1)} \prod_{i=1}^m L(\chi_i,1/2)\right| \\
\leq &\left( \sum_{\substack{(\chi_i)\in \wide^*(q, m;1),\\ L(\chi_1,1/2)\cdots L(\chi_m,1/2)\neq 0}} 1 \right)^{1/2}\left(\sum_{(\chi_i)\in \wide^*(q, m;1)} \prod_{i=1}^m |L(\chi_i,1/2)|^2 \right)^{1/2},\end{align*}
which implies that 
\begin{align*}
 &\#\{(\chi_i)\in \wide^*(q, m;1): L(\chi_1,1/2)\cdots L(\chi_m,1/2)\neq 0  \}\\
 &\geq \frac{\left|\sum_{(\chi_i)\in \wide^*(q, m;1)} \prod_{i=1}^m L(\chi_i,1/2)\right|^2}{\sum_{(\chi_i)\in \wide^*(q, m;1)} \prod_{i=1}^m |L(\chi_i,1/2)|^2}\\
 &\gg \frac{q^{m-1}}{(\log q)^m},
\end{align*}
uniformly in $m\geq 3$ with $m=o(q^{1/2}\log q)$, using (\ref{eq:Bettin17}) and Corollary \ref{cor:uniforminm}. Here we are using:
$$ \frac{\zeta(m/2)^2}{\zeta(m)} \ll 1, $$
and 
$$ (\log (q/8\pi)+\gamma)^m = (\log q+\gamma-\log 8\pi )^m \leq (\log q)^m,$$
uniformly in $m$ and $q$.
\end{proof}
\bibliography{mybib}
\bibliographystyle{plain}
\end{document}